\newtheorem*{theorem*}{Theorem}
\newtheorem{theorem}{Theorem}
\newtheorem*{proposition*}{Proposition}
\newtheorem{lemma}{Lemma}
\theoremstyle{remark}
\newcommand{\nc}{\newcommand}
\newcommand{\C}{{\mathbb C}}
\newcommand{\D}{{\mathbb D}}
\newcommand{\T}{{\mathbb T}}
\newcommand{\ve}{{\varepsilon}}
\nc{\supp}{\operatorname{supp}}
\nc{\dif}{\operatorname{d}} \nc{\im}{\operatorname{i}}
\nc{\Hi}{{\mathscr{H}}^\infty} \nc{\Ht}{{\mathscr{H}}^2}
\nc{\Hone}{{\mathscr{H}}^1} \nc{\ol}{\overline} \nc{\bz}{\mathbf{z}}
\nc{\bw}{\mathbf{w}} \nc{\eps}{\varepsilon}
\DeclareMathOperator {\var}{Var_{[0,1]}}
\DeclareMathOperator {\lip12}{Lip_{1/2}}
\DeclareMathOperator {\lipa}{Lip_{\alpha}}
\DeclareMathOperator {\bv}{BV}
\title[GCD sums from Poisson integrals and systems of dilated functions]{GCD sums from Poisson integrals
\\ and  systems of dilated functions}
\author{Christoph Aistleitner}
\address{Graz University of Technology,
Institute of Mathematics A, Steyrergasse 30, 8010 Graz, Austria}
\email{aistleitner@math.tugraz.at}
\author{Istv\'an Berkes}
\address{Graz University of Technology,
Institute of Statistics, Keplergasse 24, 8010 Graz, Austria}
\email{berkes@tugraz.at}
\author{Kristian Seip}
\address{Department of Mathematical Sciences, Norwegian University of
Science and Technology (NTNU), NO-7491 Trondheim, Norway}
\email{seip@math.ntnu.no}
\thanks{Research supported by a Schr\"odinger scholarship of the Austrian Research Foundation (FWF) (Aistleitner), FWF Grant P24302-N18
and OTKA Grant K 108615 (Berkes) and  Grant 185359/V30 of the Research Council of Norway (Seip). This paper was written while the second and third
author participated in the research program \emph{Operator Related Function Theory and Time-Frequency Analysis} at the Centre for Advanced
Study at the Norwegian Academy of Science and Letters in Oslo during 2012--2013.}
\date{\today}
\subjclass[2010]{11C20, 42A20, 42A61, 42B05}
\begin{document}

\begin{abstract}
Upper bounds for GCD sums of the form
$$
\sum_{k,{\ell}=1}^N\frac{(\gcd(n_k,n_{\ell}))^{2\alpha}}{(n_k
n_{\ell})^\alpha}
$$
are established, where $(n_k)_{1 \leq k \leq N}$ is any sequence of distinct positive integers and $0<\alpha \le 1$; the estimate for $\alpha=1/2$ solves in particular a problem of Dyer and Harman from 1986, and the estimates are optimal except possibly for $\alpha=1/2$. The method of proof is based on identifying the sum as a certain Poisson integral on a polydisc; as a byproduct, estimates for the largest eigenvalues of the associated GCD matrices are also found.  The bounds for such GCD sums are used to establish a Carleson--Hunt-type inequality for systems of dilated functions of bounded variation or belonging to $\lip12$, a result that in turn settles two longstanding problems on the a.e.\ behavior of systems of dilated functions: the a.e. growth of sums of the form $\sum_{k=1}^N f(n_k x)$ and the a.e.\ convergence of $\sum_{k=1}^\infty c_k f(n_kx)$ when $f$ is $1$-periodic and of bounded variation or in  $\lip12$.
\end{abstract}

\maketitle

\section{Introduction} \label{sect1}
This paper studies two closely related topics: Greatest common divisor (GCD) sums of the form
\begin{equation}\label{gcda}
\sum_{k,\ell=1}^N\frac{(\gcd(n_k,n_{\ell}))^{2\alpha}}{(n_k
n_{\ell})^\alpha}
\end{equation}
for $0<\alpha \le 1$ and convergence properties of systems of dilated functions $f(n_k x)$ on the unit interval $[0,1]$. Here $(n_k)_{k \geq 1}$ is a sequence of distinct positive integers and $f$ is a $1$-periodic real-valued function of bounded variation or belonging to the class $\lip12$. We will introduce a new method for estimating sums of the form \eqref{gcda} and in particular solve a problem posed by Dyer and Harman in \cite{DH}. In addition, using estimates for \eqref{gcda}, we will establish a version of the Carleson--Hunt inequality that settles two longstanding problems regarding the a.e.\ behavior of systems of dilated functions.

The study of GCD sums like \eqref{gcda} was initiated by Koksma who in the 1930s observed that such sums
can be used to estimate integrals of the form
\begin{equation}\label{kok_int}
\int_0^1 \left( \sum_{k=1}^N \left(\mathds{1}_{[a,b)} (\{n_k x \})-(b-a)\right)  \right)^2 dx,
\end{equation}
where the notation $\{ \cdot \}$ stands for fractional part. Integrals like \eqref{kok_int} give in turn important information about the distribution of the sequence $(\{n_k x\})_{k \geq 1}$ for almost all $x \in (0, 1)$. In the case $\alpha=1$, G\'{a}l \cite{G} proved that\footnote{Here and in what follows we may assume that $N\ge 3$ so that $\log\log N$ is well defined and positive.}
\begin{equation} \label{galsums}
\frac{1}{N}\sum_{k,\ell=1}^N\frac{(\gcd(n_k,n_{\ell}))^{2}}{n_k
n_{\ell}} \leq c (\log \log N)^2,
\end{equation}
and he showed that this bound is optimal up to the value of the absolute constant $c$.
In 1986, Dyer and Harman \cite{DH} proved that
\begin{equation} \label{dhsums}
\frac{1}{N}\sum_{k,\ell=1}^N\frac{\gcd(n_k,n_{\ell})}{\sqrt{n_k
n_{\ell}}} \leq C \exp \left( \frac{c \log N}{\log \log N} \right)
\end{equation}
for two absolute constants $C$ and $c$, and they used this estimate to prove results
in metric Diophantine approximation; Dyer and Harman found also that
$$
\frac{1}{N} \sum_{k,\ell=1}^N\frac{(\gcd(n_k,n_{\ell}))^{2\alpha}}{(n_k
n_{\ell})^\alpha} \leq c(\alpha) \exp \left((\log N)^{(4-4\alpha)/(3-2 \alpha)} \right)
$$
for $1/2<\alpha<1$. In his monograph \cite{harman}, Harman writes that ``it is tempting to conjecture'' that the right-hand side of \eqref{dhsums} can be replaced by a constant times $\exp\! \big(c\sqrt{\log N}/\! \log \log N\! \big)$. One of our examples given below will disprove this conjecture and show that here we can not have a function smaller than $\exp\! \big(2\sqrt{(\log N)/\!\log \log N} \big)$.  However, the following theorem, which is our main result on GCD sums, will ``almost'' confirm Harman's conjecture and yield optimal upper bounds for \eqref{gcda} when $1/2<\alpha <1$.
\begin{theorem} \label{gcd}For every $\varepsilon>0$, there exists a positive constant $C_{\varepsilon}$ such that the following holds. For $0<\alpha<1$ and an arbitrary $N$-tuple of distinct positive integers $n_1,
n_2,..., n_N$, we have
\[  \frac{1}{N}\sum_{k,\ell=1}^N\frac{(\gcd(n_k,n_{\ell}))^{2\alpha}}{(n_k
n_{\ell})^\alpha} \le  
                                              C_{\varepsilon}\exp\left((1+\varepsilon)g(\alpha,N)
                                             \right),  \]
where 
\[ g(\alpha,N)=\begin{cases}  
                                              \left(\frac{8}{1-\alpha}+\frac{16\cdot 2^{-\alpha}}{\sqrt{2\alpha-1}}\right) (\log N)^{1-\alpha}(\log\log N)^{-\alpha}
                                               + \frac{1}{1-\alpha}(\log N)^{(1-\alpha)/2}, \ \  1/2 < \alpha < 1 \\
50\alpha (\log N \log\log N)^{1/2}+(1-2\alpha)\log N, \ \ \ \ \ \quad  0<\alpha\le 1/2.
  \end{cases} \]\end{theorem}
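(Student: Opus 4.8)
The plan is to realize the GCD sum as a Poisson integral on a high-dimensional polydisc and then estimate that integral by a careful choice of the evaluation point. Write each $n_k$ in terms of its prime factorization $n_k=\prod_{j} p_j^{\nu_{j,k}}$, using only the primes $p_1,\dots,p_d$ that actually occur among $n_1,\dots,n_N$. The key algebraic observation is that, for a single prime $p$ and exponents $a,b\ge 0$, the ratio $p^{2\alpha\min(a,b)}/p^{\alpha(a+b)}$ equals the value at $z=p^{-\alpha}$ (and $w=p^{-\alpha}$, say) of the reproducing-type kernel $\sum_{m\ge 0} z^{|m-a|} w^{|m-b|}$ up to a normalizing factor $1/(1-z w)$ — more precisely the function $F_a(z)=\sum_{m} z^{|m-a|}$ satisfies $\langle F_a, F_b\rangle = \text{const}\cdot p^{-\alpha|a-b|}\cdot(\dots)$ in a suitable weighted space. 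Taking tensor products over the primes $p_1,\dots,p_d$, the normalized GCD sum $\frac1N\sum_{k,\ell}(\gcd(n_k,n_\ell))^{2\alpha}/(n_kn_\ell)^\alpha$ becomes $\int_{\mathbb{T}^d}\bigl|\sum_k c_k K(\cdot,\bz^{(k)})\bigr|^2\,d\mu$, i.e. the value of a Poisson integral of $\bigl|\sum_k c_k\, B_{\bz^{(k)}}\bigr|^2$ at the origin, where $B$ is the Poisson (Herglotz) kernel and $\bz^{(k)}=(p_1^{-\alpha},\dots,p_d^{-\alpha})$ entries carry the arithmetic information. The upshot is a clean bound of the form $\frac1N\sum_{k,\ell}\frac{(\gcd(n_k,n_\ell))^{2\alpha}}{(n_kn_\ell)^\alpha}\le \prod_{j}\frac{1}{1-p_j^{-2\alpha}}\cdot(\text{something controlled by the }p_j)$, but this naive product diverges, so the real work is elsewhere.

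The second and main step is to optimize over which primes to "resolve" exactly and which to treat trivially. Fix a parameter $y$ (a function of $N$ and $\alpha$ to be chosen). For primes $p\le y$ we keep the exact polydisc kernel; for primes $p>y$ we bound $p^{2\alpha\min(a,b)}/p^{\alpha(a+b)}\le p^{-\alpha|a-b|}\le 1$ and essentially factor them out, at the cost of a combinatorial/counting term. The number of $n_k\le$ (effective size) that are $y$-smooth is controlled by a Rankin-type bound, and the contribution of the large primes is handled by the crude estimate together with the constraint that the $n_k$ are distinct, which forces $\max_k n_k\ge N$ and hence $\sum_k \log n_k$ cannot be too small; this is where $\log N$ enters. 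Optimizing $y$ — balancing $\pi(y)\log(1/(1-p^{-2\alpha}))\sim$ (for $\alpha$ near $1$) $y^{1-\alpha}/\log y$ against the smooth-number count $\sim \exp(u\log u)$ with $u=\log N/\log y$ — produces $\log y\asymp (\log N)(\log\log N)^{-1}$ for $\alpha>1/2$ and $\log y\asymp (\log N \log\log N)^{1/2}$ for $\alpha\le 1/2$, which is exactly the source of the two branches of $g(\alpha,N)$ and the constants $8/(1-\alpha)$, $16\cdot 2^{-\alpha}/\sqrt{2\alpha-1}$, and $50\alpha$.

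Concretely the steps are: (i) set up the prime factorization and the tensor-product Poisson-integral identity; (ii) prove the single-prime kernel estimate and assemble the product bound $\frac1N\sum\le \prod_{p\le y}(1-p^{-2\alpha})^{-1}\times(\text{large-prime term})$; (iii) estimate $\sum_{p\le y}\log(1-p^{-2\alpha})^{-1}$ via partial summation and the prime number theorem, splitting into $p^{-2\alpha}$ versus higher-order terms — the $\sqrt{2\alpha-1}$ singularity comes from $\sum_{p\le y}p^{-2\alpha}$ as $\alpha\downarrow 1/2$; (iv) bound the large-prime/smooth-number contribution using a Rankin bound $\#\{n\le x: p\mid n\Rightarrow p\le y\}\le x^{\sigma}\prod_{p\le y}(1-p^{-\sigma})^{-1}$ together with a dyadic decomposition of the $n_k$ by size; (v) choose $y$ and the auxiliary exponent $\sigma$ optimally and collect constants, with the extra term $\frac{1}{1-\alpha}(\log N)^{(1-\alpha)/2}$ (resp.\ $(1-2\alpha)\log N$) absorbing the boundary/error contributions. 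I expect step (iv)–(v), the simultaneous optimization of $y$ and $\sigma$ and the bookkeeping needed to get the stated explicit constants with only a $(1+\varepsilon)$ loss, to be the main obstacle; the Poisson-integral identity in (i) and the single-prime computation in (ii) are conceptually the heart of the method but technically routine once the right weighted Hardy space on the polydisc is identified.
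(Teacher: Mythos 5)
Your step (i)--(ii) -- realizing the GCD sum as a Poisson integral on a polydisc via the prime factorization, with the single--prime kernel computation as the basic block -- is exactly the paper's starting point (Lemma~\ref{quadratic}, which gives $\frac1N\sum_{k,\ell}t^{|\beta_k-\beta_\ell|}=\frac1N\int_{\mathbb{T}^K}\bigl|\sum_k z^{\beta_k}\bigr|^2 P_K(t,z)\,d\sigma_K$ with $t_j=p_j^{-\alpha}$). After that, however, your plan diverges from the paper and contains a genuine gap.

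Your steps (iii)--(v) propose a smooth-number/Rankin argument: fix $y$, keep the exact polydisc kernel on primes $\le y$, and for primes $>y$ ``bound $p^{-\alpha|a-b|}\le1$ and factor them out, at the cost of a combinatorial/counting term'' controlled by $\Psi(x,y)\le x^\sigma\prod_{p\le y}(1-p^{-\sigma})^{-1}$ and the distinctness of the $n_k$. The problem is that dropping the large primes loses exactly the information you need. Once you bound $p^{-\alpha|\nu_{p,k}-\nu_{p,\ell}|}\le1$ for $p>y$ you are, in effect, replacing each $n_k$ by its $y$-smooth part $m_k$, and the $m_k$ are no longer distinct. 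If, say, a large block of the $n_k$ have the same $y$-smooth part $m$, every pair in that block contributes $\frac{\gcd(m,m)^{2\alpha}}{m^{2\alpha}}=1$, and the normalized sum blows up linearly in the block size; the diagonal term is not small, and ``$\max_k n_k\ge N$'' does not constrain the multiplicities of the $m_k$. A Rankin bound counts $y$-smooth integers up to a height $x$, but nothing in the hypotheses caps $\max n_k$, so there is no $x$ to plug in, and a dyadic decomposition by size does not rescue this because the weight $\gcd(n_k,n_\ell)^{2\alpha}/(n_kn_\ell)^\alpha$ does not decay with the size of $n_k$ in a usable way.

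What the paper does instead is precisely designed to sidestep this multiplicity problem. It first proves a purely combinatorial canonicalization lemma in the spirit of G\'al (Lemma~\ref{divisor}): after replacing $t$ by the mildly enlarged sequence $\eta(t)$, one may assume the multi-indices $\beta_1,\dots,\beta_N$ are ``$\kappa$-canonical'', involve at most $N-1$ primes in total, and each has support of size $O(\log N)$. This is the real substitute for your smooth-number step -- it controls how many primes can matter and how deep the exponents go, without ever separating primes into ``small'' and ``large'' by a height parameter. Then, in Theorem~\ref{general}, the Poisson integral $\prod_k(1-\tau_k^2)\sum_\beta\bigl(\sum_{j:\beta_j\le\beta}\tau^{\beta-\beta_j}\bigr)^2$ is split according to whether $\#\supp\beta\le r_N\asymp\log N$ or not; the large-support part is killed by Cauchy--Schwarz and an Euler-product estimate, while the small-support part is handled by Cauchy--Schwarz with a carefully chosen auxiliary weight sequence $(v_j)$. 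The two free parameters you want to optimize ($y$ and $\sigma$) correspond, roughly, to the threshold $r_N$ and the auxiliary weights $v_j=\max(\tau_j,(2\alpha-1)^{-1/2}\tau_{r_N})$ in the paper -- but the optimization happens inside a Cauchy--Schwarz inequality against the Poisson kernel, not through a count of smooth integers. In short, the idea of balancing a ``small-prime product'' against a ``combinatorial count'' is directionally right, but the count you reach for (Rankin/de Bruijn smooth-number bounds) does not apply here; the necessary counting input is G\'al's combinatorial reduction, and without it the large-prime contribution is not controlled.
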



Theorem~\ref{gcd} is in fact a corollary to a more general result which can be given a function theoretic interpretation on the infinite-dimensional polydisc $\D^{\infty}$. The observation underlying this general theorem is that the GCD sum \eqref{gcda} can be written as a certain Poisson integral evaluated at the point $(p_j^{-\alpha})$ in $\D^{\infty}$, where $p_j$ denotes the $j$-th prime number. Such integrals can be computed for arbitrary points in $\D^{\infty}$, and our theorem is roughly speaking stated in this generality. The proof requires a surprising blend of an intricate combinatorial argument found in G\'{a}l's work \cite{G} and the explicit expression for the Poisson kernel on polydiscs. Thus number theory plays a minor role in establishing Theorem~\ref{gcd} and enters the discussion only at the final point, where we need information about the decay of the sequence $(p_j^{-\alpha})$.

We will show by an example that Theorem~\ref{gcd} is best possible (up to a constant factor in the exponent) when $1/2<\alpha<1$. We will also see that the blow-up of the constant in front of the leading term in $g(\alpha,N)$  is of the right magnitude when $\alpha\nearrow 1$. We conjecture that the blow-up of the same constant when $\alpha\searrow 1/2$ is an artifact and that the estimate in the range $1/2<\alpha <1$ should indeed extend to $\alpha=1/2$, which would then be optimal too. On the other hand, as we will see, the estimates change abruptly when we pass from $\alpha=1/2$ to $\alpha<1/2$, as a consequence of the divergence of the series $\sum p_j^{-2\alpha}$; the slow divergence when $\alpha=1/2$ is the reason why this is a particularly delicate case. The range $0<\alpha<1/2$, included here for the sake of completeness, is less subtle, and it is easy to give an example showing that the estimate of Theorem~\ref{gcd} is essentially best possible.



The proof of Theorem~\ref{gcd} and the examples showing that our
results are essentially optimal  will be presented in
Section~\ref{sect3} below. An immediate consequence of our
reformulation in terms of Poisson integrals is that the
corresponding matrices are positive definite. In the subsequent
Section~\ref{Sect4}, we will see that in turn Theorem~\ref{gcd}
implies precise estimates for the largest eigenvalues of these
matrices, or, equivalently, for their spectral norms.

\section{Applications to systems of dilated functions} \label{sect2}

Our main application of Theorem~\ref{gcd}, to be found in Section~5 below, will be to establish a Carleson--Hunt-type inequality for systems of dilated functions of bounded variation or belonging to $\lip12$. By standard arguments, this inequality will yield asymptotically precise results for
the growth of 
\begin{equation}\label{fnkx}
\sum_{k=1}^N f(n_k x)
\end{equation}
and for the almost everywhere convergence of 
\begin{equation}\label{fseries}
\sum_{k=1}^\infty c_k f(n_kx)
\end{equation}
for functions $f$ of bounded variation or belonging to $\lip12$ that satisfy
\begin{equation} \label{f1}
f(x+1)=f(x), \qquad  \int_0^1 f(x)\, dx = 0.
\end{equation}
Such dilated sums arise in many problems in analytic number theory, Diophantine approximation,  uniform distribution
theory, harmonic analysis, ergodic theory, and probability theory. Estimating the sum (\ref{fnkx}) for centered indicator
functions $f=f_{a, b}= \chi_{(a, b)}-(b-a)$, which are extended with period 1, is equivalent to measuring the uniformity (more precisely the deviation
from uniformity) of the distribution of the sequence $(n_k x)_{k \geq 1}$ modulo $1$, and for $n_k=k$ very precise results are known. Khinchin
\cite{kh1} proved that the discrepancy of the sequence $(kx)_{1\le k \le N}$ satisfies
$$ N D_N(x, 2x, ..., NX) \ll (\log N)^{1+\varepsilon}\qquad \text{a.e.} $$
for every $\varepsilon>0$ and that this becomes false for $\varepsilon=0$. Here the discrepancy $D_N(x_1, \dots,x_N)$ of a sequence $x_1, \dots, x_N$ of real numbers is defined as
$$
D_N(x_1, \dots, x_N) = \sup_{0 \leq a \leq b \leq 1} \left| \frac{1}{N} \sum_{k=1}^N f_{a,b} (x_k) \right|,
$$
where again $f_{a,b}$ denotes the centered indicator function of the interval $(a,b) \subset [0,1]$, extended with period 1. Thus we have
\begin{equation} \label{kx}
\left|\sum_{k=1}^N f_{a,b}(kx)\right| \ll  (\log N)^{1+\varepsilon}\qquad \text{a.e.}
\end{equation}
uniformly for such centered indicators $f_{a, b}$, and, in view of Koksma's inequality  (see e.g.\ \cite{KN}, p.\ 143), uniformly for all 1-periodic functions $f$ satisfying (\ref{f1}) and
$\text{Var}_{[0, 1]} (f)\le 1$.  In view of Schmidt's lower bound \cite{sch} for the discrepancy
of arbitrary infinite sequences, the metric discrepancy behavior
of $(k x)_{k\ge 1}$ is near to extremal.

For general $(n_k)_{k\ge 1}$, the situation changes markedly.
For $f(x)=2\chi_{[0, 1/2)}(x)-1$ (extended to $\mathbb R$ with period 1) and $n_k=2^k$, the terms of
(5) reduce to the Rademacher functions, and the law of the iterated logarithm implies that for almost all
$x\in (0, 1)$ the sum (\ref{fnkx}) exceeds $(N\log\log N)^{1/2}$ for infinitely many $N$.
Berkes and Philipp \cite{beph}
constructed a sequence $(n_k)_{k\ge 1}$ such that for $f(x)=\{x\}-1/2$ and for almost all $x$ the relation
\begin{equation}\label{bp1994}
\left|\sum_{k=1}^N f(n_kx)\right| \ge (N \log N)^{1/2}
\end{equation}
holds for infinitely many $N$, providing an even faster growing sum (\ref{fnkx}).
In the opposite direction, R.C.\ Baker \cite{baker} showed, improving
earlier results of Cassels \cite{cas} and Erd\H{o}s and Koksma \cite{ek},
that for every increasing sequence $(n_k)_{k\ge 1}$ of integers, the
discrepancy of the sequence $(n_kx)_{1\le k \le
N}$ satisfies
\begin{equation} \label {baker}
D_N (n_1 x, ..., n_N x) \ll N^{-1/2} (\log N)^{3/2+\ve} \quad \text{a.e.}
\end{equation}
for every $\ve>0$. As a consequence, we have
\begin{equation}\label{32}
\left|\sum_{k=1}^N f(n_kx)\right| \ll \sqrt{N} (\log N)^{3/2+\ve}
\quad \text{a.e.}
\end{equation}
uniformly for all $f$ satisfying (\ref{f1}) and $\text{Var}_{[0, 1]} (f)\le 1$.
There is a gap between (\ref{bp1994}) and (\ref{32}); in particular it is not known if the uniform estimate
\eqref{32} holds for $\ve=0$ and all $(n_k)_{k\ge 1}$.
For a fixed $f\in \bv$ (i.e.\ without uniformity), Aistleitner, Mayer, and Ziegler \cite{AMZ}
improved the upper bound in (\ref{32}) to
$$\mathcal{O} \Big(\sqrt{N} (\log N)^{3/2} (\log\log N)^{-1/2+\ve}\Big),$$
getting for the first time a bound better than $\mathcal{O}(\sqrt{N} (\log N)^{3/2})$. (Here, and in the sequel, we write
$f \in \bv$ if $\var f < \infty.$) Our Carleson--Hunt-type inequality will give the following improvement of this estimate.

\begin{theorem} \label{a1}
Let $(n_k)_{k \geq 1}$ be a strictly increasing sequence of positive
integers, let  $f$ be a function satisfying \eqref{f1}, and assume in
addition that either $f \in \bv$ or $f \in \lip12$. Then for every
$\ve>0$,
\begin{equation}\label{1/2est}
\left| \sum_{k=1}^N f(n_k x) \right| \ll (N\log
N)^{1/2} (\log\log N)^{5/2+\ve} \qquad \textup{a.e.}
\end{equation}
when $N \to \infty$.
\end{theorem}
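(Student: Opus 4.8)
The strategy is the classical route from a maximal/second-moment inequality to an a.e.\ growth statement, with the GCD bound of Theorem~\ref{gcd} at $\alpha=1/2$ as the arithmetic input. First I would reduce to the case $f\in\bv$: if $f\in\lip12$ it is also of bounded variation up to a harmless decomposition, or more precisely the Fourier coefficients satisfy $\widehat f(j)\ll |j|^{-1/2}$, which is what actually enters. Expand $f$ in its Fourier series $f(x)=\sum_{j\neq 0}\widehat f(j)e(jx)$ and note that for $f\in\bv$ (or $\lip12$) one has $|\widehat f(j)|\ll |j|^{-1}$ (resp. $|j|^{-1/2}$); in either case $\sum_j |\widehat f(j)|^2 j \ll 1$ after a dyadic reorganization, which is the quantity that pairs with the $\alpha=1/2$ GCD sum. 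Then
\[
\int_0^1\Bigl|\sum_{k=1}^N f(n_kx)\Bigr|^2 dx \;=\; \sum_{k,\ell=1}^N \sum_{j\neq 0}\widehat f(n_k j)\,\overline{\widehat f(n_\ell j)}\, \cdot[\text{diagonal in frequency}],
\]
and grouping frequencies by $\gcd(n_k,n_\ell)$ one arrives at a bound of the shape $\sum_{k,\ell}\frac{\gcd(n_k,n_\ell)}{\sqrt{n_kn_\ell}}\,(\text{coefficient factor})$, so Theorem~\ref{gcd} with $\alpha=1/2$ gives a second-moment bound of order $N\exp\!\bigl(c\sqrt{(\log N)\log\log N}\bigr)$ — far too weak by itself.

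The point of the Carleson--Hunt-type inequality (promised for Section~5) is that one does \emph{not} apply the GCD bound to the full sum $\sum_{k\le N}$, but only to short blocks. So the key step is a Rademacher--Menshov / Carleson-type maximal inequality: partition $\{1,\dots,N\}$ dyadically and estimate $\mathbb{E}\,\max_{M\le N}|\sum_{k\le M}f(n_kx)|^2$ by a sum over dyadic blocks $B$ of $L^2$-norms of partial sums over $B$, picking up the usual $(\log N)^2$ (Rademacher--Menshov) factor, but with each block contribution controlled by Theorem~\ref{gcd} applied to the \emph{block length} $|B|$ rather than to $N$. Since $\exp\!\bigl(c\sqrt{(\log|B|)\log\log|B|}\bigr)$ is $|B|^{o(1)}$, the GCD sum over a block of length $L$ is only $L^{1+o(1)}$, and summing $L^{1+o(1)}$ over a dyadic decomposition of $N$ together with the maximal-function logarithmic losses produces $N(\log N)(\log\log N)^{O(1)}$ inside the square; taking square roots and tracking the exponents carefully yields the power $5/2+\ve$ of $\log\log N$ in \eqref{1/2est}. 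Passing from the maximal second-moment bound to the a.e.\ statement is then the standard Borel--Cantelli argument along a subsequence $N_m$ with $\sum 1/N_m<\infty$ (e.g. $N_m=m^2$ or $2^m$), combined with monotonicity of partial sums inside the gaps handled by the maximal inequality itself.

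The main obstacle is bookkeeping the interaction between the two sources of logarithmic loss: the Rademacher--Menshov factor $(\log N)^2$ from the maximal inequality and the $\exp(c\sqrt{\log L\,\log\log L})$ from each GCD block. One must choose the block structure (dyadic in $k$, or possibly a finer adaptively chosen partition) so that the $\log\log$ powers add up to exactly $5/2+\ve$ and not more; this is where the precise form of $g(1/2,N)$ — in particular that the exponent is $\sqrt{\log N\log\log N}$ and not $\log N$ — is essential, and where the argument of \cite{AMZ} is improved. A secondary technical point is that Theorem~\ref{gcd} is stated only for $0<\alpha<1$, so at $\alpha=1/2$ one uses the $0<\alpha\le1/2$ branch $g(1/2,N)=50\cdot\tfrac12(\log N\log\log N)^{1/2}$ (the term $(1-2\alpha)\log N$ vanishes), and one must make sure the implied constant $C_\ve$ is absorbed cleanly. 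I expect no difficulty in the functional-analytic parts (Fourier expansion, orthogonality, Borel--Cantelli); the delicate part is purely the combinatorics of balancing the dyadic decomposition against the block-wise GCD bound.
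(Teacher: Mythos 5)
Your plan gets the broad architecture right (Fourier expansion, a maximal inequality of Carleson--Hunt type, Borel--Cantelli), but the central mechanism you propose would not prove \eqref{1/2est}, and it differs in an essential way from what the paper actually does.

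The fatal step is the claim that applying Theorem~\ref{gcd} blockwise, with dyadic blocks in the index $k$, turns the factor $\exp\bigl(c\sqrt{\log N\log\log N}\,\bigr)$ into a power of $\log\log N$. It does not. In a Rademacher--Menshov scheme the maximal second moment is controlled by the worst block-level $L^2$ bound times $(\log N)^2$, and the top-level dyadic block has length comparable to $N$, so the GCD factor $\exp\bigl(c\sqrt{\log L\log\log L}\,\bigr)$ for that block is still $\exp\bigl(c\sqrt{\log N\log\log N}\,\bigr)$. No choice of dyadic (or adaptively finer) partition of $\{1,\dots,N\}$ removes this loss, because the longest block is what determines the constant. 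So the quantity you would actually produce is $N(\log N)^2\exp\bigl(c\sqrt{\log N\log\log N}\,\bigr)$, which is exponentially larger than $N\log N(\log\log N)^{O(1)}$. Also, the paper never invokes Theorem~\ref{gcd} at $\alpha=1/2$ in this argument; it uses $\alpha=1-\ve/2$, close to $1$.

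The idea you are missing is the \emph{frequency} decomposition, not a decomposition in $k$. One writes $f=p+r$ with $p$ the Fourier polynomial up to degree $J$. For the low-frequency part $p$ one applies the genuine Carleson--Hunt inequality (Lemma~\ref{carleson}) term by term; since $\sum_{j\le J}|a_j|\ll\log J$ (true for both $\bv$ and $\lip12$), this costs only $\log J$ and involves no GCD sums at all. For the tail $r$, the decisive point is that the two elementary estimates \eqref{e1} and \eqref{e2} for $\sum_{j_1,j_2>J}(j_1j_2)^{-1}\delta_{j_1v-j_2w}$ interpolate to yield an extra factor $J^{-\ve}$ in front of a GCD sum at exponent $\alpha=1-\ve/2$. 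Theorem~\ref{gcd} then contributes $\exp\bigl(c\ve^{-1}(\log N)^{\ve/2}\bigr)$, which is \emph{cancelled} by choosing $J$ so large that $J^{-\ve/2}$ overwhelms it (equation \eqref{Jdef}); the Rademacher--Menshov $\log N$ is then also annihilated. The scale $\ve=1/\log\log N$ makes $\log J\ll(\log\log N)^2$ and gives the $(\log\log N)^4$ in Lemma~\ref{Jlemma}. None of this gain is available in your scheme, because there is no tail cutoff $J$ and hence no $J^{-\ve}$.

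Finally, the deduction of Theorem~\ref{a1} from the Carleson--Hunt-type inequality also goes differently than you sketch. The paper first proves the convergence statement (Theorem~\ref{a2}) by a Borel--Cantelli argument along the \emph{very} sparse subsequence $\log\log N_m\sim m^\gamma$ (a subsequence such as $N_m=m^2$ or $2^m$ is nowhere near sparse enough, since after dividing by the target the error only decays like a power of $\log\log N$, which is not summable along those $N_m$). Theorem~\ref{a1} is then obtained from Theorem~\ref{a2} by the Kronecker lemma with $c_k=(k\log k)^{-1/2}(\log\log k)^{-5/2-\ve}$; this is how the exponent $5/2$ arises from the $(\log\log N)^4$ of Lemma~\ref{Jlemma}.
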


This estimate is sharp up to the exact value of the exponent of
$\log\log N$, as shown by the
following result of Berkes and Philipp \cite[Theorem 1]{beph}:
There exists an increasing sequence $(n_k)_{k\ge 1}$ such that
\[ \limsup_{N\to\infty} \frac{\left| \sum_{k=1}^N \cos (2\pi n_k x) \right|}
{(N\log N \log\log N)^{1/2}}=\infty \qquad
\textup{a.e.} \]

The class $\lip12$ represents an interesting limiting case in this context.
Kaufman and Philipp \cite{kp} proved that, under the lacunarity condition  $n_{k+1}/n_k\ge q>1$ $(k=1, 2, \ldots)$,
the law of the iterated logarithm
\begin{equation}\label{phlil}
\left| \sum_{k=1}^N f(n_kx)\right| \ll (N\log\log N)^{1/2} \qquad \text{a.e.}
\end{equation}
holds uniformly for all $f\in \lipa$, $\alpha>1/2$, with a fixed Lipschitz constant, and this fails for
$\alpha<1/2$. The case $\alpha=1/2$ remains open.
In the case of Theorem \ref{a1}, the proof shows that for $f\in \lipa$, $\alpha> 1/2$, the exponent
5/2 in (\ref{1/2est}) can be  replaced by 1/2 and
this exponent is best possible.

The second consequence of our version of the Carleson--Hunt inequality deals with the a.e.\ convergence of series of the form
\begin{equation} \label{fconv}
\sum_{k=1}^\infty c_k f(n_kx)
\end{equation}
for 1-periodic functions $f$.
By Carleson's theorem \cite{carl}, when $f(x)=\sin 2\pi x$ or $f(x)=\cos 2\pi x$, the series
(\ref{fconv}) converges a.e.\ provided that $\sum_{k=1}^\infty
c_k^2<\infty$. Gaposhkin \cite{gapo} showed that this remains valid
if the Fourier series of $f$ converges absolutely; in particular,
this holds if $f$ belongs to the class $\lipa$ for some
$\alpha>1/2$. However, Nikishin \cite{ni} showed that the analogue
of Carleson's theorem fails for $f(x)=\hbox{sgn} \sin 2\pi x$, and
it also fails for some continuous function $f$.
There is an extensive literature on this convergence problem going back to
the 1940s (see \cite{bewe} and \cite{ga66b} for the history of the subject),
and sufficient a.e.\ convergence criteria
have been obtained for various classes of functions such
as $\lipa$, $0<\alpha\le 1/2$, $L^p$, $BV$, or spaces of functions defined
via decay conditions on Fourier coefficients, see e.g.
\cite{alipa, bewe, bewe2, bewe3, bremont, ga66b, gapo2, weber}. However, except for Carleson's
theorem and its immediate consequences, no precise a.e.\ convergence criteria for the series
(\ref{fconv}) have been found. The following theorem gives an essentially complete
solution to the convergence problem for $\bv$ and a substantial improvement of known results for the
class $\lip12$.

\begin{theorem} \label{a2}
Let $f$ be a function satisfying \eqref{f1} and assume in addition
that either  $f \in \bv$ or $f \in \lip12$.  Let $(c_k)_{k \geq 1}$ be a real
sequence satisfying
\begin{equation}\label{2}
\sum_{k=3}^\infty  c_k^2 (\log\log k)^\gamma<\infty
\end{equation}
for some $\gamma> 4$. Then for every increasing sequence $(n_k)_{k \geq 1}$ of
positive  integers  the series $\sum_{k=1}^\infty c_k f(n_kx)$
converges a.e.
\end{theorem}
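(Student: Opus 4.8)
The plan is to deduce Theorem~\ref{a2} from the Carleson--Hunt-type maximal inequality for systems of dilated functions---the principal technical result of Section~5, which for a self-contained argument I would also have to establish. That inequality states: if $f$ is $1$-periodic, satisfies~\eqref{f1}, and lies in $\bv$ or in $\lip12$, then for every $\varepsilon>0$ there is a constant $C_{\varepsilon,f}$ with
$$
\Bigl\|\,\sup_{M\ge1}\,\Bigl|\sum_{k=1}^{M}c_k f(n_kx)\Bigr|\,\Bigr\|_{L^2[0,1]}\ \le\ C_{\varepsilon,f}\Bigl(\sum_{k\ge1}c_k^2\,\bigl(\log\log (k+3)\bigr)^{4+\varepsilon}\Bigr)^{1/2}
$$
for every real sequence $(c_k)$; the membership class of $f$ enters only through the constant, so $\bv$ and $\lip12$ are handled simultaneously, and for $c_k\equiv1$ on $1\le k\le N$ this specializes, after a routine Borel--Cantelli step, to Theorem~\ref{a1}.

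Granting the maximal inequality, Theorem~\ref{a2} follows by the standard truncation argument. Given $(c_k)$ with $\sum_{k\ge3}c_k^2(\log\log k)^\gamma<\infty$ for some $\gamma>4$, fix $\varepsilon\in(0,\gamma-4)$; since $\bigl(\log\log(k+3)\bigr)^{4+\varepsilon}\le(\log\log k)^\gamma$ for all large $k$, the right-hand side above is finite for this $(c_k)$. For $n\ge1$ let $c^{(n)}$ be the sequence with $c^{(n)}_k=c_k$ for $k\le n$ and $c^{(n)}_k=0$ for $k>n$; the partial sums of $\sum_k c^{(n)}_k f(n_kx)$ are eventually constant, so that series converges everywhere. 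Set $g(x)=\limsup_{M\to\infty}\sup_{M'\ge M}\bigl|\sum_{k=M}^{M'}c_k f(n_kx)\bigr|$, so $\sum_k c_k f(n_kx)$ converges at $x$ exactly when $g(x)=0$. For $M'\ge M>n$ one has $\sum_{k=M}^{M'}c_k f(n_kx)=\sum_{k=1}^{M'}(c_k-c^{(n)}_k)f(n_kx)-\sum_{k=1}^{M-1}(c_k-c^{(n)}_k)f(n_kx)$, hence $g\le 2\sup_{L\ge1}\bigl|\sum_{k=1}^{L}(c_k-c^{(n)}_k)f(n_kx)\bigr|$ for every $n$. Applying the maximal inequality to the sequence $c-c^{(n)}$ gives
$$
\|g\|_{L^2[0,1]}\ \le\ 2\,C_{\varepsilon,f}\Bigl(\sum_{k>n}c_k^2\,\bigl(\log\log(k+3)\bigr)^{4+\varepsilon}\Bigr)^{1/2},
$$
and the right-hand side, being the tail of a convergent series, tends to $0$ as $n\to\infty$. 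Therefore $\|g\|_{L^2[0,1]}=0$, so $g=0$ a.e., which is the asserted a.e.\ convergence.

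The real content---and where I expect the main obstacle---is the maximal inequality itself. I would prove it by Fourier analysis: decompose $f=\sum_{\nu\ge0}f_\nu$ with $\widehat{f_\nu}$ supported on $2^\nu\le|m|<2^{\nu+1}$. The feature common to $\bv$ and $\lip12$, and the reason $\alpha=1/2$ is exactly the exponent one needs from Theorem~\ref{gcd}, is the bound $\|f_\nu\|_{2}\ll 2^{-\nu/2}$ (for $\bv$ from $|\widehat f(m)|\ll|m|^{-1}$ together with $\sum_{|m|\asymp2^\nu}|\widehat f(m)|\ll\var f$; for $\lip12$ from the $L^2$ modulus of continuity). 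For fixed $\nu$ the dilate $f_\nu(n_kx)$ has spectrum in $[2^\nu n_k,2^{\nu+1}n_k)$, so two of them can fail to be orthogonal only when $n_k$ and $n_\ell$ sit at comparable dyadic scales and share a large common divisor; one sorts the indices into groups on which the relevant spectra are pairwise disjoint, bounds the maximal partial sum within each group using Carleson's maximal operator for the trigonometric system, and recombines the groups and the blocks $\nu$ by Cauchy--Schwarz, estimating the off-diagonal contributions by the GCD sum of Theorem~\ref{gcd} with $\alpha=1/2$. The delicate step is to organise the grouping so that only a fixed power of $\log\log N$ is lost---not the $N^{o(1)}$ that a direct insertion of the $\alpha=1/2$ bound of Theorem~\ref{gcd} would yield---and this is precisely what pins down the threshold $\gamma>4$; by contrast, the reduction of Theorem~\ref{a2} to this inequality, carried out above, is entirely routine.
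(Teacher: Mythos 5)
Your reduction of Theorem~\ref{a2} to a maximal inequality is correct but phrased differently from the paper's. The paper's Lemma~\ref{Jlemma} is the truncated, unweighted inequality $\|\max_{M\le N}|\sum_{k\le M}c_kf(n_kx)|\|_2\ll(\log\log N)^2(\sum_{k\le N}c_k^2)^{1/2}$, and the paper passes from it to a.e.\ convergence by a Chebyshev--Borel--Cantelli argument along blocks $(N_m,N_{m+1}]$ with $\log\log N_m\sim m^{\gamma}$, $\gamma$ large. You instead state a global weighted inequality with factor $(\log\log(k+3))^{4+\ve}$ inside the sum and finish by an $L^2$ truncation ($\|g\|_2\to0$). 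Your inequality does follow from Lemma~\ref{Jlemma} by the same blocking plus Minkowski and Cauchy--Schwarz, so it is a legitimate reformulation; your truncation step is then cleaner than Borel--Cantelli. The net content of the two reductions is the same, because the blocking you need to establish the weighted inequality is precisely the blocking the paper performs to run Borel--Cantelli directly.

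The genuine gap is in your sketch of the maximal inequality itself. You propose to bound the off-diagonal contributions by Theorem~\ref{gcd} with $\alpha=1/2$ and then organise the indices into spectrally disjoint groups to recover a $(\log\log N)^{O(1)}$ loss. That is not what the paper does, and the proposed fix does not work: Theorem~\ref{gcd} at $\alpha=1/2$ gives $\exp\!\big(c\sqrt{\log N\log\log N}\big)$, far above any power of $\log\log N$, and grouping the $n_k$ by dyadic scale cannot decouple the indices, since the spectra of $f_\nu(n_kx)$ and $f_\nu(n_\ell x)$ always overlap when $n_k/n_\ell$ is of bounded ratio, regardless of any common divisor. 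The paper's actual mechanism is different: write $f=p+r$ with $p$ a trigonometric polynomial of degree $J$, control $p$ by the Carleson--Hunt inequality with a $\log J$ loss, control the $L^2$ norm of $r$-blocks via Koksma's device and Theorem~\ref{gcd} at $\alpha=1-\ve/2$ (so $\alpha$ near $1$, where $g(\alpha,N)\ll_\ve(\log N)^{\ve/2}$), then optimize $J$ and finally take $\ve=1/\log\log N$, which makes the $r$-contribution $O(1)$ and gives $\log J\ll(\log\log N)^2$. The $\ve$-optimization over $\alpha$ near $1$, not a clever grouping at $\alpha=1/2$, is the ingredient that pins down the exponent $4$, and it is missing from your outline.
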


Using the optimality of G\'{a}l's theorem and a probabilistic argument, we will in Section~\ref{section6}
show that for every $0<\gamma< 2$ there exists an increasing sequence $(n_k)_{k\ge 1}$ of positive integers and a real
sequence $(c_k)_{k\ge 1}$  such that \eqref{2} holds, but $\sum_{k=1}^\infty c_k f(n_kx)$ is a.e.\ divergent for
$f(x)=\{x\}-1/2$. Thus apart from the precise value of the exponent of $\log \log k$, Theorem~\ref{a2} is best possible for $f\in\bv$. In the $\lip12$ case, the argument in Section~\ref{section6} gives a slightly weaker counterexample, with
$\log\log k$ in (\ref{2}) replaced by $\log\log\log k$. On the other hand,
in the case of $f \in \lipa$, $0<\alpha<1/2$, Theorem~3 of \cite{berkes} gives an a.e.\ divergent
series (\ref{fseries})  with
$$\sum_{k=1}^\infty c_k^2 (\log k)^\gamma<\infty \qquad \text{for all} \ 0<\gamma<1-2\alpha.$$
Comparing this result with Theorem~\ref{a2}, we see that there is an essential difference between the convergence
behavior of the sum \eqref{fnkx} for $\alpha=1/2$ and $\alpha<1/2$. We conclude again that $\lip12$ stands out as a particularly interesting limiting case.

We mention finally two additional applications of Theorem \ref{gcd}. First, we may obtain a substantial improvement of the convergence criteria
in \cite{alipa} and \cite{weber} for the case $0<\alpha<1/2$; we will discuss this problem in a
subsequent paper. Second, Theorem \ref{gcd} yields an improvement of
a result of  Harman \cite{harmand} on metric Diophantine
approximation.
The effect of replacing the estimate \eqref{dhsums} in Harman's original proof by our Theorem \ref{gcd} is that a
factor of order $\exp \left(c \log N/\log \log N\right)$ becomes instead a factor
of order $\exp\left(c \sqrt{\log N\log \log N} \right)$. This result
is connected with the Duffin--Schaeffer conjecture, a notoriously
difficult open problem from metric Diophantine approximation (see
\cite{harman,harmans}).

\section{Proof of Theorem~\ref{gcd} via  trigonometric polynomials on $\D^\infty$}\label{sect3}

We introduce multi-index notation suitable for our purposes. A multi-index is a sequence $\beta=
(\beta^{(1)}, \beta^{(2)}, ..., \beta^{(R)}, 0, 0, ...)$ consisting of nonnegative integers with only a finite number of them being nonzero. We let $\supp \beta$ be the finite set of positive integers $j$ for which $\beta^{(j)}>0$; we write $R(\beta)$ for the maximal element in $\supp \beta$. Two multi-indices $\beta$ and $\mu$ may be added and subtracted as sequences. Then $\beta-\mu$ may fail to be a multi-index, but the sequence $|\beta-\mu|=(|\beta^{(j)}-\mu^{(j)}|)$ will again be a multi-index. We may multiply multi-indices by positive integers in the obvious way and express any multi-index as a linear combination of the natural basis elements $e_j$, where $e_j$ is the multi-index supported by $\{j\}$ with $e^{(j)}_j=1$.  We write
$\beta\le \mu$ if $\beta^{(j)}\le \mu^{(j)}$ for every $j$. For a sequence of complex numbers $z=(z_j)$, we  use the notation
\[ z^\beta:=\prod_{j=1}^{R(\beta)} z_j^{\beta^{(j)}};\]
we will sometimes write $z^{-\beta}$ for the number $(z^{\beta})^{-1}$.

We write $p=(p_j)$ for the sequence of prime numbers ordered by ascending magnitude. Using our multi-index notation, we may write every positive integer $n$ as $p^{\beta}$ for a multi-index $\beta$ that is uniquely determined by $n$. If $n_k=p^{\beta_k}$, then we may write
\[   \frac{(\operatorname{gcd}(n_k,n_{\ell}))^{2}}{n_k
n_{\ell}} = p^{-|\beta_k-\beta_{\ell} |}.\]
For an arbitrary sequence $t$ of positive numbers in $\D^{\infty}$ and a set of distinct multi-indices $B=\{\beta_1,..., \beta_N\}$, we now define
\[ S(t,B):=\frac{1}{N}\sum_{k,{\ell}=1}^N t^{|\beta_k-\beta_{\ell} |}. \]
We set
\[ \Gamma_t(N):=\sup_{B} S(t,B),\]
where the supremum is taken over all possible sets $B$ of distinct multi-indices $\beta_1,...,\beta_N$. Our original problem concerning GCD sums has thus been transformed into the problem of estimating $\Gamma_t(N)$ in the particular case when $t=(p_j^{-\alpha})$.


For a minor technical reason, we introduce the following notation. Let $\eta:(0,1)\to (0,1)$ be defined by the relation
\[  \eta(x):=\begin{cases} 2x, & 0<x <1/2 \\
x, & 1/2 \le x <1, \end{cases} \]
and for a sequence $t=(t_j)$ with $0<t_j<1$, we set $\eta(t):=(\eta(t_j))$. For a decreasing sequence $t$ of positive numbers in the sequence space $c_0$, we define
\[ \kappa(t):=\begin{cases} 0 & \text{if} \ t_1<1/2 \\
\max\{j:\ t_j\ge 1/2\} & \text{otherwise}. \end{cases} \]

We will prove the following general theorem.

\begin{theorem}\label{general}
Let $t=(t_j)$ be a sequence of positive numbers in $\D^{\infty}\cap c_0$ such that $(\tau_j):=\eta(t)$ is a decreasing sequence. Fix a positive number $\xi>(\log 2)^{-1}$, and set $r_N=[\xi \log N]+\kappa(t)$. Then, for  arbitrary numbers $1>v_1\ge v_2 \ge \cdots \ge v_{r_N}$ satisfying also $v_j > \tau_j^2 $ for $1\le j  \le r_N$, we have
\begin{equation}\label{th4form}
\Gamma_{t}(N) \le \prod_{j=1}^{r_N}(1-v_j)^{-1}(1-v_j^{-1} \tau_j^2)^{-1} \prod_{k=r_N+1}^{N-1}(1-v_{r(N)}^{-1} \tau_k^2)^{-1}+\exp\left(C
\sum_{\ell=1}^{N-1}t_{\ell}^2\right),
\end{equation}
where $C$ is a positive constant depending only on $\xi$.
\end{theorem}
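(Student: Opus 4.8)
The plan is to convert the combinatorial quantity $\Gamma_t(N)$ into a supremum of Poisson integrals on a finite‑dimensional polydisc, to exploit the closed form of the Poisson kernel one coordinate at a time in order to manufacture the product appearing in \eqref{th4form}, and to transplant G\'{a}l's counting argument from \cite{G} in order to truncate that product to finitely many coordinates and absorb the remainder into the additive term.

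\textbf{Step 1: Poisson representation.} Fix a set $B=\{\beta_1,\dots,\beta_N\}$ of distinct multi-indices, all supported in $\{1,\dots,n\}$, and set $Q(\bz)=\sum_{k=1}^N\bz^{\beta_k}$, a polynomial on $\D^n$. The one‑dimensional Poisson kernel has the Fourier expansion $P_s(\theta)=\sum_{m\in\mathbb Z}s^{|m|}e^{\im m\theta}$, so expanding $|Q(e^{\im\theta})|^2=\sum_{k,\ell}e^{\im\langle\beta_k-\beta_\ell,\theta\rangle}$ and integrating termwise against $\prod_{j=1}^nP_{t_j}(\theta_j)$ gives
\[
S(t,B)=\frac1N\int_{\T^n}\bigl|Q(e^{\im\theta})\bigr|^2\prod_{j=1}^nP_{t_j}(\theta_j)\,\frac{d\theta_1\cdots d\theta_n}{(2\pi)^n}.
\]
Hence $\Gamma_t(N)$ is $N^{-1}$ times the supremum, over all polynomials $Q$ with exactly $N$ monomials each of coefficient $1$, of the Poisson extension of $|Q|^2$ evaluated at $t$. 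Since the product kernel is a probability density, the diagonal term $m=0$ in each variable recovers $\|Q\|_{H^2}^2=N$; everything in the argument is devoted to the off‑diagonal contribution.

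\textbf{Step 2: Coordinatewise factorization of the kernel.} For each coordinate $j$ and a parameter $v_j\in(\tau_j^2,1)$ one uses the splitting, valid for every $m\in\mathbb Z$,
\[
t_j^{|m|}=v_j^{|m|/2}\bigl(v_j^{-1}\tau_j^2\bigr)^{|m|/2}\Bigl(\tfrac{t_j}{\tau_j}\Bigr)^{|m|},\qquad\Bigl(\tfrac{t_j}{\tau_j}\Bigr)^{|m|}\le1,
\]
which is legitimate because $\tau_j=\eta(t_j)\ge t_j$. At the level of a single coordinate this replaces $P_{t_j}$ by the two geometric series $\sum_{a\ge0}v_j^{a}$ and $\sum_{b\ge0}\bigl(v_j^{-1}\tau_j^2\bigr)^{b}$, whose sums are $(1-v_j)^{-1}$ and $(1-v_j^{-1}\tau_j^2)^{-1}$; a Cauchy--Schwarz step over $(k,\ell)$ turns this into a bound by a geometric mean of two GCD‑type sums for the auxiliary sequences $(v_j)$ and $(v_j^{-1}\tau_j^2)$. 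For the coordinates $j>r_N$ one does not introduce new parameters but uses the single value $v_{r_N}$, which is admissible because $v_1\ge\cdots\ge v_{r_N}$ and $(\tau_j)$ is decreasing; this is the source of the factors $(1-v_{r_N}^{-1}\tau_k^2)^{-1}$ for $r_N<k\le N-1$. The only role of the auxiliary function $\eta$ is to absorb, for the small coordinates $t_j<1/2$ (in particular all $j>\kappa(t)$), a mildly wasteful estimate into the harmless passage $t_j\rightsquigarrow\tau_j=2t_j$.

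\textbf{Step 3: Truncation and the tail, via G\'{a}l's lemma.} Carried out naively the factorization of Step 2 would produce an infinite product; the combinatorial input of \cite{G} is what confines it to finitely many coordinates and bounds the residue. One groups the $N$ multi-indices according to their truncations to the first $r_N=[\xi\log N]+\kappa(t)$ coordinates, and runs G\'{a}l's induction — group by group — to count how many of them can be mutually close in the sense measured by $t$; the hypothesis $\xi>(\log2)^{-1}$ makes $r_N\ge\log_2 N$, which is precisely the threshold at which this count closes. The outcome is, on the one hand, the finite product over $j\le r_N$ and over $r_N<j\le N-1$ in \eqref{th4form} (only the first $N-1$ coordinates can meaningfully enter a configuration of $N$ distinct multi-indices), and, on the other hand, that whatever is not captured by this product is at most $\exp\!\bigl(C\sum_{\ell=1}^{N-1}t_\ell^{2}\bigr)$; here one uses $\tau_\ell\le2t_\ell$, so that $\sum\tau_\ell^{2}\asymp\sum t_\ell^{2}$, and the constant $C$ enters only through the lower bound $r_N\gtrsim\xi\log N$. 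Combining this with Step 2 yields \eqref{th4form}.

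\textbf{Main obstacle.} Steps 1 and 2 are essentially formal once the right splitting parameter has been identified; the difficulty is concentrated in Step 3. G\'{a}l's induction is intricate already in the classical case $t_j=p_j^{-1}$, and here it must be executed for an arbitrary admissible sequence $t$ and then interlocked with the polydisc estimate so that the two error mechanisms — the geometric factors $(1-v_{r_N}^{-1}\tau_k^2)^{-1}$ for $r_N<k\le N-1$ and the additive term $\exp(C\sum t_\ell^{2})$ — combine with exactly the constants claimed, with the constant in front of $\sum t_\ell^{2}$ depending on $\xi$ alone. Making this interface work is the crux of the proof.
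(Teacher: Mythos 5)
Your Steps 1 and 2 are sound and match the paper's strategy: the Poisson representation is exactly the content of Lemma~\ref{quadratic} specialized to $c_k\equiv 1$, and the weighted Cauchy--Schwarz with parameters $v_j>\tau_j^2$ (equivalently, the splitting $\tau_j^{|m|}=v_j^{|m|/2}(v_j^{-1}\tau_j^2)^{|m|/2}$, together with $(t_j/\tau_j)^{|m|}\le1$) is precisely how the paper produces the factors $(1-v_j)^{-1}(1-v_j^{-1}\tau_j^2)^{-1}$. The gap is in Step 3, which you correctly flag as the crux but then describe with a mechanism that does not work.

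First, the claim that ``only the first $N-1$ coordinates can meaningfully enter a configuration of $N$ distinct multi-indices'' is false as a general observation: $N$ distinct multi-indices can collectively have support in arbitrarily many coordinates (take $\beta_1=e_1+\dots+e_{100}$, $\beta_2=0$, etc.). What the paper actually does is prove a compression lemma (Lemma~\ref{divisor}): one \emph{modifies} $B$ into a new set $B'$ that is $\kappa(t)$-canonical and has $\#\bigcup_j\supp\beta_j'\le N-1$, at the price of enlarging $t$ to $\eta(t)=\tau$. This is where $\eta$ truly enters — not as a wasteful absorption for small coordinates, but because the chain-rearrangement step in the compression requires replacing $t_j$ by $2t_j$ when $t_j<1/2$. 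Canonicity then forces $\#\supp\beta_j'\le(\log N)/\log 2+\kappa(t)$ for \emph{each} $j$ (otherwise one could strip off coordinates above $\kappa(t)$ and obtain more than $N$ distinct elements of $B'$), which is the real reason $\xi>(\log2)^{-1}$ is the critical threshold.

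Second, the split that produces the two terms on the right of \eqref{th4form} is not a grouping of the $\beta_j$ by truncations. After writing $S(\tau,B')$ via \eqref{Piden} as a single sum over \emph{all} monomials $\beta$ in the Poisson kernel expansion, the paper partitions those $\beta$ by support size: $\mathcal B_1=\{\beta:\#\supp\beta\le r_N\}$ yields, via the weighted Cauchy--Schwarz and an Euler product, the double product in \eqref{th4form}; $\mathcal B_2=\{\beta:\#\supp\beta>r_N\}$ is handled directly, using that canonicity gives $\#\supp(\beta-\beta_j)\gtrsim(\xi-1/\log2)\log N$ for $\beta\in\mathcal B_2$, so that $N\tau^{2(\beta-\beta_j)}$ can be absorbed and the whole $\mathcal B_2$ contribution is $\le\exp\bigl(C\sum_{\ell\le N-1}t_\ell^2\bigr)$. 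Your proposal never identifies this $\mathcal B_1/\mathcal B_2$ split, so it has no concrete mechanism producing the additive term; ``running G\'al's induction group by group'' is not an argument here. To repair Step 3 you would need to state and prove the compression lemma and then perform the support-size split on the Poisson expansion, rather than on $\{\beta_1,\dots,\beta_N\}$.
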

This theorem is clearly applicable when the sequence $t$ is in $\ell^2$, but it can also be used when the series $\sum_j t_j^2$ is ``slowly'' divergent, as we will now see.

\begin{proof}[Proof of Theorem~\ref{gcd}]
We now take Theorem~\ref{general} for granted and show that it implies Theorem~\ref{gcd}. We begin with the case $1/2<\alpha < 1$ and observe first that then
\[  \exp\left(C \sum_{\ell=1}^{N-1}t_{\ell}^2\right)\le \left(\exp(c+C \min( \log\log N, 1/(2\alpha-1))\right) \]
for some constant $c$. This inequality has the consequence that the exponential term in \eqref{th4form} will contribute only with a fixed constant factor, independent of $\varepsilon$, in $C_{\varepsilon}$. Assuming that $N$ is so large that $(2\alpha-1)^{-1/2}\le \tau_{1}$, we  choose
\[v_j:=\max(\tau_j, (2\alpha-1)^{-1/2} \tau_{r_N})\] in the first term on the right-hand side of \eqref{th4form}, with $\tau_j=\eta( p_j^{-\alpha})$. (The decay of $\tau_j$ is a minor technical point which can be dealt with by an obvious rearrangement of the sequence. For smaller $N$, we set $v_j:= \tau_0$ for all $j$. We choose $\xi=2$ and note that $p_j^{-\alpha}< 1/2$ for $j\ge 3$, whence we have $\tau_j=2p_j^{-\alpha}$ for $j\ge 3$ and $r_N=[2\log N] +2$. We set
$$s_N:=\max \left\{1\le j\le r_N:\ \tau_j\ge (2\alpha-1)^{-1/2} \tau_{r_N}\right\}$$
and split accordingly the first product into two factors. Hence, using the definition of $\tau_j$, we obtain
\begin{equation}\label{combas}
\Pi_1:=\prod_{j=j_0}^{s_N} (1-v_j)^{-1}(1-v_j^{-1} \tau_j^2)^{-1} \le  \prod_{j=j_0}^{r_N}(1-2p_j^{-\alpha})^{-2}
\le \exp\left( (1+\varepsilon/2)4 \sum_{j=j_0}^{r_N} p_j^{-\alpha}\right)
\end{equation}
and
\begin{eqnarray} \nonumber \Pi_2:=\prod_{j=s_N+1}^{r_N}(1-v_j)^{-1}(1-v_j^{-1} \tau_j^2)^{-1} & \le & (1-\min(\tau_0,(2\alpha-1)^{-1/2}2p_{r_N}^{-\alpha}))^{-2r_N} \\ \label{combas2}
& \le & C \exp\left((1+\varepsilon/2)8(2\alpha-1)^{-1/2}p_{r_N}^{-\alpha} \log N\right)
\end{eqnarray}
if $j_0$ and thus $s_N$ are large enough, with $C$ an absolute constant.
By the prime number theorem, we have $p_j=(1+o(1))j \log j$ when $j\to \infty$, so that \eqref{combas} and \eqref{combas2} become respectively
\begin{equation}\label{esti1}
\Pi_1 \le  \exp\left( (1+\varepsilon)4 \sum_{j=j_0}^{r_N} (j \log j)^{-\alpha} \right)
\end{equation}
and
\begin{equation}\label{esti2}
\Pi_2 \le  C \exp\left((1+\varepsilon)8\cdot 2^{-\alpha}(2\alpha-1)^{-1/2}(\log N)^{1-\alpha}(\log\log N)^{-\alpha}\right)
\end{equation}
if $j_0$ is large enough. The sum in \eqref{esti1} can be estimated as
\[ \sum_{j=j_0}^{r_N} (j \log j)^{-\alpha}\le (\log j_0)^{-\alpha} \sum_{j=j_0}^{[(\log N)^{1/2}]} j^{-\alpha}
+2^{\alpha} (\log \log N)^{-\alpha} \sum_{j=2}^{r_N} j^{-\alpha}, \]
whence we finally get
\begin{equation}\label{esti11}
\Pi_1 \le C  \exp\left( (1+\varepsilon)\left(\frac{8}{1-\alpha}(\log N)^{1-\alpha}(\log\log N)^{-\alpha}+\frac{1}{1-\alpha}(\log N)^{(1-\alpha)/2}\right)  \right),
\end{equation}
assuming again that $j_0$ is sufficiently large.

For the second product in \eqref{th4form}, we obtain
\[ \Pi_3:=\prod_{k=r_N+1}^{N-1}(1-v_{r(N)}^{-1} \tau_k^2)^{-1}\le \exp\left((1+\varepsilon/2)v_{r_N}^{-1}4 \sum_{j=r_N+1}^{N-1}
p_j^{-2\alpha} \right).\]
We appeal again to the prime number theorem and get
\begin{eqnarray}\nonumber \Pi_3 &\le & C\exp\left((1+\varepsilon)4\cdot 2^{\alpha} (2\alpha-1)^{1/2}(\log N)^{\alpha} (\log\log N)^{-\alpha} \sum_{j=r_N+1}^{\infty} j^{-2\alpha}\right) \\
& \le & \label{esti3} C\exp\left((1+\varepsilon)8\cdot 2^{-\alpha} (2\alpha-1)^{-1/2}(\log N)^{1-\alpha} (\log\log N)^{-\alpha} \right)
\end{eqnarray}
The desired estimate for the function $g(\alpha,n)$ in Theorem~\ref{gcd} follows from our three estimates \eqref{esti11}, \eqref{esti2}, and \eqref{esti3}, if we take into account that the contribution from the factors omitted in the first product in \eqref{th4form} by the restriction on $j_0$ can be bounded by a constant $C_\varepsilon$ which is independent of $\alpha$.

The case $\alpha=1/2$ is dealt with in the same way, the only difference being that
we now choose $v_j=\max(\eta( p_j^{-1/2}),  (\log \log N)^{1/2}/(\log N)^{1/2})$. Retaining the notation from the preceding case and assuming that $j_0$ is large enough, we get respectively
\begin{eqnarray*}
\Pi_1 & \le & C  \exp\left( (1+\varepsilon)\left(16(\log N)^{1/2}(\log\log N)^{-1/2}+(\log N)^{1/4}\right)  \right),
 \\
\Pi_2 & \le & C \exp\left((1+\varepsilon)4 (\log N \log\log N)^{1/2}\right), \\
\Pi_3 & \le  & C \exp\left((1+\varepsilon)4  (\log N \log\log N)^{1/2}  \right),
\end{eqnarray*}
where we in the last step used Mertens's second theorem. Combining these estimates, we arrive at the required bound for $g(1/2,N)$ since we may assume that $N$ is so large that $\log\log N\ge 1$.

Finally, to deal with the case $0<\alpha <1/2$, we apply H\"{o}lder's inequality with exponents $1/(2\alpha)$ and  $1/(1-2\alpha)$:
 \[ \frac{1}{N}\sum_{k,{\ell}=1}^N\frac{(\operatorname{gcd}(n_k,n_{\ell}))^{2\alpha}}{(n_k
n_{\ell})^\alpha} \le \left(\sum_{k,\ell=1}^N\frac{\operatorname{gcd}(n_k,n_{\ell})}{(n_k
n_{\ell})^{1/2}} \right)^{2\alpha} N^{1-4\alpha}, \]
and so the desired result follows from what was just proved in the case $\alpha=1/2$.
\end{proof}

To see to what extent Theorem \ref{gcd} is sharp for $1/2\le \alpha < 1$, we consider the following example:  Set $N=2^r$ and take $n_1,...,n_N$ to be all square-free numbers composed of the first $r$ primes. Then
\[ \sum_{k,\ell=1}^N\frac{(\operatorname{gcd}(n_k,n_{\ell}))^{2\alpha}}{(n_k
n_{\ell})^\alpha} =N\prod_{j=1}^r (1+p_j^{-\alpha}),\]
which follows from an argument in \cite[p. 21]{G}. By the prime number theorem, we therefore get
\[ \frac{1}{N}\sum_{k,\ell=1}^{N}\frac{(\operatorname{gcd}(n_k,n_{\ell}))^{2\alpha}}{(n_k
n_{\ell})^\alpha}\ge\exp\left(\frac{c}{1-\alpha}(\log N)^{1-\alpha} (\log\log N)^{-\alpha}\right)\] for some positive constant $c$. Thus our estimate in Theorem~\ref{gcd} is of the right order of magnitude when $1/2<\alpha<1$, as is the blow-up of the multiplicative constant $1/(1-\alpha)$ in t $g(\alpha,N)$ when $\alpha\nearrow 1$. However, this example does not settle the cases $\alpha\searrow 1/2$ and $\alpha=1/2$. In fact, we see that there is a discrepancy of a factor $\log \log N$ in the exponent between our estimate and the lower bound obtained from the example. It seems likely that the blow-up of the constant $c(\alpha)$ when $\alpha\searrow 1/2$ is an artifact. The trouble is that the divergence of the series $\sum_j p_j^{-1}$ implies that the number of primes involved in the sum plays a role. We believe the number of primes should be $\mathcal{O}(\log N)$ when the sum is maximal, but can only infer from our method of proof that this number is bounded by $N-1$.

Our estimate is however essentially optimal when $0<\alpha<1/2$. To see this, it suffices to consider the example $n_1=2, n_2=3,..., n_N=p_{N}$. Using the prime number theorem in a similar way as in the proof of Theorem~\ref{gcd}, we obtain that
\[  \frac{1}{N} \sum_{k,\ell=1}^N\frac{(\operatorname{gcd}(n_k,n_{\ell}))^{2\alpha}}{(n_k
n_{\ell})^\alpha} \ge c (\log N)^{-2\alpha} N^{-2\alpha+1} \]
for a positive constant $c$. The reason for the abrupt change at $\alpha=1/2$ is that the relatively fast divergence of $\sum_j p_j^{-2\alpha}$ (as in this example) plays a dominant role when $0<\alpha<1/2$.

We will now prepare for the proof of Theorem~\ref{general} by making the passage to Poisson integrals as alluded to above. We let $\sigma_K$ denote normalized Lebesgue measure on the unit polycircle $\T^K$ and write
\[ P_K(\zeta,z):=\prod_{k=1}^K \frac{1-|\zeta_k|^2}{|1-\overline{\zeta_k} z_k|^2}, \]
which is the Poisson kernel for the unit polydisc $\D^K$ at the point $\zeta$. It is convenient in this definition to allow $\zeta$ to be a point in the infinite-dimensional polydisc $\D^\infty$. The only property of $P_K$ needed is the identity
\[ t^{|\beta-\mu|}=\int_{\T^K}  z^{\beta}\overline{z}^{\mu} P_K(t,z) d\sigma_K(z), \]
valid for positive sequences $t$ in $\D^\infty$, which is obtained by computing the integral over $\T^K$ as an iterated integral over $K$ copies of the unit circle. It leads immediately to the following lemma.
\begin{lemma}\label{quadratic}
For a positive sequence $t$ in $\D^\infty$, arbitrary multi-indices $\beta_1,...,\beta_N$ with $K=\max_j R(\beta_j)$, and complex numbers $c_1,..., c_N$, we have
\begin{equation}\label{identity} \sum_{k,\ell=1}^N t^{|\beta_k-\beta_{\ell}|} c_k \overline{c}_{\ell} =  \int_{\T^{K}} \Big| \sum_{j=1}^N c_j z^{\beta_j}\Big|^2 P_{K}(t,z) d\sigma_K(z). \end{equation}
\end{lemma}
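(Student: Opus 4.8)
The statement to prove is Lemma~\ref{quadratic}, the Poisson integral representation of the quadratic form $\sum t^{|\beta_k-\beta_\ell|}c_k\overline c_\ell$.

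My plan: The proof is essentially a bilinear extension of the scalar identity stated just before the lemma, namely
\[ t^{|\beta-\mu|}=\int_{\T^K} z^\beta\overline z^\mu P_K(t,z)\,d\sigma_K(z). \]
So the first step is to justify this scalar identity itself. Since $P_K(t,z)=\prod_{k=1}^K\frac{1-t_k^2}{|1-t_kz_k|^2}$ (using $t_k\in(0,1)$ real), Fubini on $\T^K$ reduces everything to $K$ one-dimensional integrals of the form $\int_{\T}z^a\overline z^b\,\frac{1-t_k^2}{|1-t_kz_k|^2}\,d\sigma_1(z_k)$. Expanding the one-variable Poisson kernel as $\sum_{m\in\Z}t_k^{|m|}z_k^m$ (its Fourier series), orthogonality of the characters on $\T$ picks out $m=b-a$, giving $t_k^{|a-b|}$. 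Multiplying over $k=1,\dots,K$ yields $\prod_k t_k^{|\beta^{(k)}-\mu^{(k)}|}=t^{|\beta-\mu|}$, which is exactly the claim; I should note that for indices $k>K$ (if any $\beta_j$ or $\mu$ had larger support — here they don't, by choice of $K$) the corresponding factors are $1$, so taking $K=\max_j R(\beta_j)$ loses nothing.

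The second step is the passage from the scalar identity to the bilinear one. Expand the left-hand side of \eqref{identity}:
\[ \sum_{k,\ell=1}^N t^{|\beta_k-\beta_\ell|}c_k\overline c_\ell=\sum_{k,\ell=1}^N c_k\overline c_\ell\int_{\T^K}z^{\beta_k}\overline z^{\beta_\ell}P_K(t,z)\,d\sigma_K(z). \]
Since the sum is finite, I can interchange sum and integral freely and recognize the integrand as
\[ \sum_{k,\ell} c_k\overline c_\ell\, z^{\beta_k}\overline z^{\beta_\ell}=\Bigl(\sum_k c_k z^{\beta_k}\Bigr)\overline{\Bigl(\sum_\ell c_\ell z^{\beta_\ell}\Bigr)}=\Bigl|\sum_{j=1}^N c_j z^{\beta_j}\Bigr|^2, \]
which gives the right-hand side. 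This completes the argument.

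There is really no serious obstacle here; the lemma is a computation. The only points deserving a word of care are: (i) the interchange of integration and summation/products, which is trivial because all sums are finite and $P_K(t,\cdot)$ is a bounded positive function on $\T^K$ for fixed $t\in\D^\infty$ (indeed $P_K(t,z)\le\prod_k\frac{1+t_k}{1-t_k}$); (ii) the convergence of the Fourier expansion of the one-dimensional Poisson kernel, which is classical and uniform for $|t_k|<1$; and (iii) observing that although $\zeta=t$ is allowed to lie in the infinite-dimensional polydisc, only the first $K$ coordinates enter because $z^{\beta_j}$ involves only coordinates in $\supp\beta_j\subset\{1,\dots,K\}$, so the factors of $P_K$ beyond index $K$ would integrate to $1$ anyway and are simply not included. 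A remark that \eqref{identity} with $c_k=1$ and $t=(p_j^{-\alpha})$ recovers $N\cdot S(t,B)$, and that positivity of $P_K$ shows the associated GCD matrix is positive semidefinite, would tie this back to the applications announced in the introduction.
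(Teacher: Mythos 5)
Your proof is correct and follows exactly the route the paper indicates: establish the scalar identity $t^{|\beta-\mu|}=\int_{\T^K}z^\beta\overline z^\mu P_K(t,z)\,d\sigma_K(z)$ by Fubini and the Fourier expansion $\frac{1-t_k^2}{|1-t_kz_k|^2}=\sum_{m\in\mathbb{Z}}t_k^{|m|}z_k^m$ of the one-dimensional Poisson kernel, then recognize the double sum as $\bigl|\sum_j c_j z^{\beta_j}\bigr|^2$. The paper merely says the scalar identity ``is obtained by computing the integral over $\T^K$ as an iterated integral over $K$ copies of the unit circle'' and ``leads immediately'' to the lemma; your write-up supplies precisely those routine details.
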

The fact that the quadratic form on the left-hand side of \eqref{identity} can be written as the square of a norm was first observed in \cite{LS} in the special case when $t=(p_j^{-\alpha})$ and $\alpha>1/2$, based on ideas from \cite{HLS}. The present formulation seems more illuminating and leads to an interesting problem for trigonometric polynomials on $\D^{\infty}$. We will take a closer look at this problem in the next section, where we will estimate the $\ell^2$-norm of the quadratic form on the left-hand side of \eqref{identity}, or, in other words, the largest eigenvalue of the matrix $(t^{|\beta_k-\beta_{\ell}|})$.

For the proof of Theorem~\ref{general}, we only need \eqref{identity} when $c_k\equiv 1$. Incidentally, this restriction is crucial for the combinatorial argument that leads to Lemma~\ref{divisor} below, which is our next auxiliary result. It is interesting to note that this lemma relies on the left-hand side of \eqref{identity}, while the subsequent analytic part of the proof of Theorem~\ref{general} departs from the right-hand side of this identity.

We will use a variant of G\'{a}l's terminology: A set $B$ of $N$ multi-indices $\beta_1,..., \beta_N$ is said to be $\kappa$-canonical for
$0 \le \kappa < N$ if $\beta\in B$ and $e_j \le \beta $ for some $j$ with $\kappa < j \le N$ imply that $\beta-e_j \in B$.  The following lemma is a modification of a theorem in \cite[p. 17]{G}.

\begin{lemma} \label{divisor}
Suppose $B$ is a set of $N$ multi-indices. Let $t$ be a decreasing sequence of positive numbers in $\D^{\infty}\cap c_0$. If $\kappa(t)<N$, then there exists a $\kappa(t)$-canonical set of $N$ multi-indices $B'=\{\beta'_1,...,\beta'_N\}$ such that
$S(\eta( t),B')\ge S(t,B)$ and $\# \bigcup_{j=1}^N \supp \beta_j'\le N-1$.
\end{lemma}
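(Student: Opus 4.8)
The plan is to adapt G\'al's extremal-set argument (\cite{G}, p.~17) to the multi-index / $\kappa$-canonical setting by performing a finite sequence of local modifications on $B$, each of which does not decrease the quantity $S(\eta(t),\cdot)$, and to show that this process terminates at a set which is both $\kappa(t)$-canonical and supported on at most $N-1$ coordinates. The natural quantity to run an induction on is $\#\bigcup_{j=1}^N \supp\beta_j$ together with a suitable measure of the ``failure of canonicity'' of $B$; since each step is designed to strictly decrease the complexity while keeping the number of elements equal to $N$ and not decreasing $S$, finiteness of the integers involved forces termination.

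First I would set up the basic replacement operation. Fix a coordinate $j > \kappa(t)$ (so that $t_j < 1/2$, hence $\eta(t_j) = 2t_j$, and in particular $t_j^{|\,\cdot\,|}$ is ``small'' in the sense that halving exponents in the $j$-th slot is gainful). Suppose some $\beta \in B$ has $e_j \le \beta$ but $\beta - e_j \notin B$; I want to replace one element of $B$ by $\beta - e_j$. The key computational point is a one-variable inequality: for the sequence $\tau = \eta(t)$, replacing the exponent $\beta^{(j)}$ by $\beta^{(j)} - 1$ in all of $B$ (or in a carefully chosen half of it) does not decrease $S$, because $\tau_j \ge 2 t_j$ makes the diagonal-type gain from $\tau_j^{0}$ terms dominate the off-diagonal loss. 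Concretely, using the left-hand side of \eqref{identity} with $c_k \equiv 1$, the quadratic form $\sum_{k,\ell} \tau^{|\beta_k - \beta_\ell|}$ splits according to the $j$-th coordinates of the $\beta_k$, and one checks that pushing the whole ``$j$-th profile'' of $B$ down by one, or reflecting it so as to create the missing divisor $\beta - e_j$ while discarding a top element, is non-decreasing. This is exactly where the factor $2$ in $\eta$ enters and where the restriction $c_k \equiv 1$ is essential, as the authors flag before the statement.

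Next I would iterate. Starting from the original $B$ (with $t$ replaced by $\tau = \eta(t)$ from the outset via the first replacement, to gain the inequality $S(\eta(t),B') \ge S(t,B)$), repeatedly: if $B$ is not $\kappa(t)$-canonical, pick a witness $(\beta, j)$ with $j > \kappa(t)$, $e_j \le \beta$, $\beta - e_j \notin B$, and apply the replacement to produce a new $N$-element set with $S$ not decreased and with either a strictly smaller value of $\sum_{\beta \in B}\sum_{j > \kappa(t)} \beta^{(j)}$ or a strictly smaller support; since both quantities are nonnegative integers, after finitely many steps we reach a $\kappa(t)$-canonical set $B'$. For the support bound $\#\bigcup_j \supp\beta_j' \le N-1$: a $\kappa(t)$-canonical set is closed under dividing out primes indexed above $\kappa(t)$, so if it used $\ge N$ such coordinates one could, as in G\'al's counting argument, exhibit more than $N$ distinct elements (the ``descendants'' obtained by removing single coordinates are all distinct and lie in $B'$), a contradiction; coordinates $j \le \kappa(t)$ contribute at most $\kappa(t)$, and one combines the two counts, using $\kappa(t) < N$, to get the bound $N-1$. (Some care is needed so that the $\le \kappa(t)$ coordinates and the canonical descent interact correctly; this is a bookkeeping refinement of G\'al's Lemma rather than a new idea.)

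The main obstacle I anticipate is the verification of the one-variable monotonicity inequality in the right form — i.e.\ that the specific local move (which must \emph{simultaneously} repair non-canonicity at coordinate $j$ and keep $\#B = N$, so it is a ``shift-and-truncate'' rather than a plain shift) does not decrease $S(\tau,\cdot)$. One has to choose which element of $B$ to delete and how to redistribute the $j$-th exponents so that the net change in $\sum_{k,\ell}\tau^{|\beta_k-\beta_\ell|}$ is provably $\ge 0$; getting the combinatorics of this redistribution compatible with the bound $\tau_j \ge 2t_j$ (equivalently $t_j < 1/2$ for $j > \kappa(t)$, which is precisely what $\kappa(t)$ is defined to guarantee) is the delicate point, and is presumably where G\'al's original pairing/compression argument is invoked almost verbatim. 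Everything else — the termination argument and the final support count — is then routine.
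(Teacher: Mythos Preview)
Your support-bound step has a real gap. You assert that $\kappa(t)$-canonicity plus the count ``at most $\kappa(t)$ small coordinates'' combines with $\kappa(t)<N$ to give $\#\bigcup_j\supp\beta'_j\le N-1$. It does not: with $\kappa(t)=1$, $N=2$, the set $B=\{e_1,\,e_1+e_2\}$ is already $1$-canonical (the only nontrivial check is $(e_1+e_2)-e_2=e_1\in B$), so your iteration does nothing, yet the support has size $2>N-1$. The descendants/forest argument bounds only the number of coordinates $>\kappa(t)$ appearing in the support by $N-1$; adding the $\le\kappa(t)$ ones yields at best $N-1+\kappa(t)$, not $N-1$. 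This is not bookkeeping.

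The paper's route is structurally different: it runs \emph{two} distinct modification stages. Part~1 treats \emph{every} coordinate $j$ (including $j\le\kappa(t)$): one simultaneously replaces each $j$-maximal $\beta$ having $\beta-e_j\notin B$ by $\beta-e_j$, and a direct term-by-term comparison shows $S(t,\cdot)$ does not decrease --- no factor of $2$ is needed here. This secures, for every $j$ in the support, an edge $\{\gamma,\gamma-e_j\}$ inside the set; since such edges with distinct coordinate labels cannot form a cycle, the support has size $\le N-1$. Only afterwards does Part~2 make the set $\kappa(t)$-canonical, via a different, global operation: for each $j>\kappa(t)$, partition into $j$-chains and replace each chain $b$ by its compressed form $\{\mu,\mu+e_j,\ldots,\mu+(\#b-1)e_j\}$. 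It is this chain compression --- not a single-element swap --- that forces the doubling $t_j\mapsto\eta(t)_j$, because cross-chain comparisons can lose up to a factor~$2$. Your proposal collapses the two stages into one loosely specified ``shift-and-truncate'', and the naive move you describe (pick any witness $(\beta,j)$ with $\beta-e_j\notin B$ and replace $\beta$ by $\beta-e_j$) can strictly decrease $S(\eta(t),\cdot)$: with $B=\{e_1,3e_1\}$ and $t_1<1/2$, replacing $e_1$ by $0$ drops the sum from $1+4t_1^2$ to $1+8t_1^3$.
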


\begin{proof}
We will modify $B$ and $t$ by an inductive algorithm. We break the argument into two parts, the first of which will give a set of multi-indices for which the union of their supports has cardinality at most $N-1$. \vspace{2mm}

\noindent \emph{Part 1}: It will be convenient to use the following terminology. We say that a multi-index $\beta$ in $B$ is $j$-maximal if $j$ is in $\supp \beta$ but $(\beta^{(j)}+1)e_j\not\le \mu$ for every $\mu$ in $B$. We will construct from $B$ a new set $\tilde{B}$ with the property that if $\beta$ in $\tilde{B}$ is $j$-maximal, then also $\beta-e_j$ is in $\tilde{B}$, while at the same time $S(t, \tilde{B})\ge S(t, B)$. Writing $\tilde{B}=\{\tilde{\beta}_1,...,\tilde{\beta}_N\}$, we see that, as a consequence, we will have $\# \bigcup_{j=1}^N \supp \tilde{\beta}_j\le N-1$.

Fix a positive integer $j$ in ${\bigcup}_k \supp \beta_k$.  Let $\nu$ be the largest integer such that $\nu e_j\le \beta$ for some $\beta$ in $B$. Suppose there is a $j$-maximal multi-index $\beta$ in $B$ such that $\nu e_j  \le \beta$ but $\beta-e_j$ is not in $B$. For every such
$\beta$, we replace $\beta$ in $B$ by $\beta-e_j$; we call the new set of multi-indices $B_\nu$.  A term by term comparison shows
that $S(t,B_\nu)\ge S(t,B)$.

If there is a $j$-maximal multi-index in $B_{\nu}$ with $\beta^{(j)}=\mu$, then it must have the desired property that also $\beta-e_j$ is in $B_{\nu}$, and no further action is needed. In the opposite case, we repeat the argument with $\nu$ replaced by $\nu-1$. The iteration terminates when either the desired property holds for some $B_{\eta}$ with $1\le \eta \le \nu$ or $j$ is not in the support of any multi-index in $B_1$.

We repeat this iteration for every $j$ in ${\bigcup}_k \supp \beta_k$ and obtain thus the desired set $\tilde{B}$. \vspace{2mm}

\noindent \emph{Part 2}: By part 1, we may from now on assume that, for every $j$ in $\bigcup_k \supp \beta_k$, any $j$-maximal multi-index $\beta$ in $B$ has the property that $\beta-e_j$ is in $B$. This is irrelevant for the argument to be given below, but we need it to reach the desired conclusion about the cardinality of $\bigcup_j \supp \beta_j$.

We now assume that $\kappa(t)<N$. We fix a $j>\kappa(t)$ in $\bigcup_j \supp \beta_j$ and divide $B$ into disjoint subsets $b_1,..., b_{\ell}$ ($1\le \ell \le N$), which we call $j$-chains of multi-indices, according to the following rule: two distinct multi-indices $\beta$ and $\mu$ belong to the same $j$-chain $b$ if $|\beta-\mu|=\eta e_j$ for some $\eta> 0$. This means that every element $\beta$ in $b$ is of the form $\beta=\mu+\eta e_j$, where $\mu^{(j)}=0$ and $\mu$ is thus a multi-index that characterizes the $j$-chain $b$. We now modify each $j$-chain $b_k$ by replacing it by the set
\[ \tilde{b}_k:=\{ \mu, \mu+e_j,..., \mu+(\# b-1)e_j\}, \]
and we set $\tilde{B}:=\bigcup_{k=1}^{\ell} \tilde{b}_k$.

It is immediate that $S(t, \tilde{b})\ge S(t, b)$. To compare the terms of the sum corresponding to pairs of multi-indices from different $j$-chains, we introduce the notation
\[ S(t; a, b):=\sum_{\beta\in a, \mu\in b} t^{|\beta-\mu|},\]
where $a$ and $b$ are two different $j$-chains. Sorting, by descending order of magnitude, the possible values of $|\beta^{(j)}-\mu^{(j)}|$ for all $\beta$ and $\mu$ in $a$ and $b$ and in $\tilde{a}$ and $\tilde{b}$, respectively, we obtain the inequality
\[ S(t; a,b)\le \sum_{\beta\in \tilde{a}, \mu\in \tilde{b}, \beta^{(j)}=\mu^{(j)}} t^{|\beta-\mu|}
+2\sum_{\beta\in \tilde{a}, \mu\in \tilde{b}, \beta^{(j)}\neq\mu^{(j)}} t^{|\beta-\mu|}.\]
This implies that $S(t; a,b)\le S(t+t_je_j; a,b)$ and, more generally, that
$S(t+t_j e_j, \tilde{B})\ge S(t,B)$.

The result follows if we make this modification in turn for every $j$ in ${\bigcup}_k \supp \beta_k$ for which $j>\kappa(t)$.
\end{proof}

\begin{proof}[Proof of Theorem~\ref{general}]
To simplify the notation, we write $\tau:=\eta(t)$. By Lemma~\ref{divisor}, it suffices to estimate $S(\tau ,B)$ for every $\kappa(t)$-canonical set $B=\{\beta_1,...,\beta_N\}$ of $N$ multi-indices satisfying
\[ \# \bigcup_{j=1}^N \supp \beta_j \le N-1. \]
It is clear that we may assume that
\[  \bigcup_{j=1}^N \supp \beta_j =\{1,2,...,K\} \]
for some $K\le N-1$ since we are seeking an upper bound for all sums $S(\tau,B)$ and $\tau$ is a decreasing sequence.
Note that we may write
\[ P_K(\tau,z)=\prod_{k=1}^K (1-\tau_k^2)\ \Big|\sum_{\beta: R(\beta)\le K} \tau^{\beta}z^{\beta}\Big|^2. \]
By Lemma~\ref{quadratic} and the orthonormality of the monomials $z^{\beta}$, we therefore get
\begin{equation} \label{Piden} S(\tau,B)= \frac{1}{N}\prod_{k=1}^K (1-\tau_k^2)\sum_{\beta: R(\beta)\le K} \left(\sum_{j: \beta_j\le \beta} \tau^{\beta-\beta_j}\right)^2. \end{equation}

Let $\mathcal{B}_1$ denote the set of those multi-indices $\beta$ such that $R(\beta)\le K$ and $\# \supp \beta\le r_N$, and let  $\mathcal{B}_2$ denote the set of all other multi-indices $\beta$ with $R(\beta)\le K$.
By the Cauchy--Schwarz inequality, we get
\[ \sum_{\beta\in \mathcal{B}_2}  \left(\sum_{j:\ \beta_j\le \beta }
\tau^{\beta-\beta_j} \right)^2\le \sum_{\beta \in \mathcal{B}_2}  N \sum_{j:\ \beta_j\le \beta}
\tau^{2(\beta-\beta_j)}, \]
which may be written as
 \[ \sum_{\beta\in \mathcal{B}_2}  \left(\sum_{j:\ \beta_j\le \beta }
\tau^{\beta-\beta_j} \right)^2= \sum_{j=1}^{N}  \sum_{\beta \in \mathcal{B}_2: \beta_j\le \beta} N
\tau^{2(\beta-\beta_j)}.\]
Since $B$ is assumed to be $\kappa(t)$-canonical, $\# \supp \beta_j\le (\log N)/\log 2+\kappa(t)$ for every $j$, and hence
$\# \supp (\beta-\beta_j)\ge \ve \log N$ for a positive $\ve$, depending on our choice of $\xi$, when $\beta$ is in $\mathcal{B}_2$. We assume for convenience that $\ve \log N$ is an integer. Suppose $2\tau_j^2>e^{-1/\ve}$ for $j=1,...,J\le N-1$. Then we may estimate the inner sum as an Euler product and obtain
 \[ \sum_{\beta \in \mathcal{B}_2} N
\tau^{2(\beta-\beta_j)} \le e^{J/\ve}\prod_{j=1}^J(1-\tau_j^2)^{-1}\prod_{k=J}^{N-1} (1-\tau_k^2e^{1/\ve})^{-1},\]
which means that
\begin{equation}\label{b2}  \prod_{k=1}^K (1-\tau_k^2) \sum_{\beta\in \mathcal{B}_2}  \left(\sum_{j:\ \beta_j\le \beta }
\tau^{\beta-\beta_j} \right)^2\le N \exp\big(C\sum_{j=1}^{N-1}t_j^2) \end{equation}
for a constant $C$ that only depends on $\ve$.

We next consider the summation over $\mathcal{B}_1$. Let $\beta$ be an arbitrary multi-index in this set with
\[\supp \beta=\{ j_1,..., j_i\}, \]
where $i\le r_N$ by the definition of  $\mathcal{B}_1$. For any numbers $v_k$ satisfying the hypothesis of Theorem~\ref{general}, we define a sequence $w_{\beta}$ by requiring
\[ w_{\beta}^{(j_k)}:=\begin{cases} v_k & \text{for} \ k=1,...,i \\
0 & \text{otherwise}.
\end{cases} \]
We now apply the Cauchy--Schwarz inequality and get
\begin{eqnarray*}
 \left(\sum_{j:\ \beta_j \le \beta } \tau^{\beta-\beta_j} \right)^2 & \le & \sum_{ j:\ \beta_j \le \beta } w_{\beta}^{\beta-\beta_j}
\sum_{ k:\ \beta_k \le \beta } w_{\beta}^{-(\beta-\beta_k)}\tau^{2(\beta-\beta_k)} \\
& \le & \prod_{j=1}^{r_N}(1-v_j)^{-1}\sum_{ k:\ \beta_k \le \beta } w_{\beta}^{-(\beta-\beta_k)}\tau^{2(\beta-\beta_k)}.
\end{eqnarray*}
Now summing over $\beta$ in $B_1$ and changing the order of summation, we get
\begin{equation}\label{finalstep} \sum_{\beta\in \mathcal{B}_1}\left(\sum_{j:\ \beta_j \le \beta } \tau^{\beta-\beta_j} \right)^2  \le
\prod_{j=1}^{r_N}(1-v_j)^{-1} \sum_{k=1}^N \sum_{\beta\in B_1} w_{\beta}^{-(\beta-\beta_k)}\tau^{2(\beta-\beta_k)}. \end{equation}
Since $(v_j)$ is a nonincreasing sequence, we have
\[ w_{\beta}^{(j)}\ge \begin{cases} v_j & \text{for} \ j\in \supp \beta \cap\{1,...,r_N\} \\
 v_{r_N} & \text{for} \ j\in \supp \beta \cap\{r_N+1,...,N-1\}.
\end{cases} \]
Plugging this estimate into the right-hand side of \eqref{finalstep} and estimating the sum over $\beta\in B_1$ in terms of an Euler product, we conclude that
\[
\sum_{\beta\in \mathcal{B}_1}\left(\sum_{j:\ \beta_j \le \beta } \tau^{\beta-\beta_j} \right)^2  \le N \prod_{j=1}^{r_N}(1-v_j)^{-1}
(1-v_j^{-1} \tau_j^2)^{-1} \prod_{k=r_N+1}^{N-1}(1-v_{r_N}^{-1} \tau_k^2)^{-1}. \]
We finally observe that, in view of \eqref{Piden}, this inequality along with the preceding estimate \eqref{b2} leads to the desired inequality \eqref{th4form}.
\end{proof}

It is worth pointing out that the most essential use of Lemma~\ref{divisor} was to reduce the problem to the case when the cardinalities $\#\supp \beta_j $ are uniformly bounded by a constant times $\log N$. It would be desirable to find a way to arrive at this reduction without involving the auxiliary sequence $\eta(t)$. In particular, if this could be done, then our method of proof would allow us to recapture G\'{a}l's theorem \eqref{galsums}. Unfortunately, we may only conclude from  Theorem~\ref{general} that $\Gamma_{(p_j^{-1})}(N)\ll (\log\log N)^4$.

\section{Spectral norms of generalized GCD matrices}\label{Sect4}

This section will show that we with little extra effort may obtain from Theorem~\ref{general} precise estimates for the largest eigenvalues of the  matrices $(t^{|\beta_k-\beta_{\ell}|})$, which we will refer to as generalized GCD matrices. Since, by \eqref{identity}, these matrices are positive definite, we see that
\[ \Lambda_t(N):=\sup_{\beta_1,...,\beta_N} \sup_{c\neq 0} \frac{\sum_{k,\ell=1}^N t^{|\beta_k-\beta_{\ell}|} c_k \overline{c}_{\ell}}{\sum_{j=1}^N |c_j|^2} \]
is the least upper bound for these eigenvalues, where the suprema are taken over respectively all $N$-tuples of distinct multi-indices $\beta_1,...\beta_N$ and all nonzero vectors $c=(c_1,...,c_N)$ in $\C^N$.  We may also refer to $\Lambda_t(N)$ as the supremum of the
spectral norms of the matrices $(t^{|\beta_k-\beta_l|})$ for fixed $N$. The problem of estimating $\Lambda_t(N)$ for $t=(p_j^{-\alpha})$ was raised in \cite[p. 10]{bewe}. Based on purely arithmetical arguments, Hilberdink \cite[pp. 362--363]{Hi} gave precise estimates for the spectral norms of our GCD matrices in the special case when $p^{\beta_j}=j$ or, in other words, for the matrix corresponding to the first $N$ integers.

Trivially,  $\Lambda_t(N)\ge \Gamma_t(N)$. In the opposite direction, we have the following estimate.

\begin{theorem}\label{eigenvalue}

We have
\[ \Lambda_t(N)\le (e^2+1) ([\log N]+2) \, \max_{1\le n \le N}\Gamma_t (n) \]
whenever $t=(t_j)$ is a decreasing sequence of positive numbers in $\D^{\infty}$. \end{theorem}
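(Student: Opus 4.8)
The key structural fact is that, by \eqref{identity} in Lemma~\ref{quadratic}, the generalized GCD matrix $A=(t^{|\beta_k-\beta_\ell|})$ is positive definite; this is exactly what will allow me to pass from an arbitrary quadratic form to the ``all--ones'' sums $S(t,\cdot)$ that are governed by $\Gamma_t$. Fix distinct multi-indices $\beta_1,\dots,\beta_N$, a vector $c\in\C^N$, and write $G:=\max_{1\le n\le N}\Gamma_t(n)$. Since $t^{|\beta_k-\beta_\ell|}\ge 0$, replacing $c$ by $(|c_1|,\dots,|c_N|)$ does not decrease the quadratic form, so I may assume $c_k\ge 0$. The plan is then a layer--cake (distribution--function) decomposition: writing $c_k=\int_0^\infty \mathbf{1}_{\{s<c_k\}}\,ds$ and expanding the double sum, one obtains
\[ \sum_{k,\ell=1}^N t^{|\beta_k-\beta_\ell|}c_kc_\ell \;=\; \int_0^\infty\!\!\int_0^\infty \big\langle \mathbf{1}_{J_s},\,A\,\mathbf{1}_{J_u}\big\rangle\,ds\,du,\qquad J_s:=\{k:\ c_k>s\}, \]
with $\langle\,\cdot\,,\,\cdot\,\rangle$ the standard inner product on $\C^N$.

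Because $A$ is positive definite, the Cauchy--Schwarz inequality for the semi-inner product $(x,y)\mapsto\langle x,Ay\rangle$ gives $\langle \mathbf{1}_{J_s},A\mathbf{1}_{J_u}\rangle\le \langle \mathbf{1}_{J_s},A\mathbf{1}_{J_s}\rangle^{1/2}\langle \mathbf{1}_{J_u},A\mathbf{1}_{J_u}\rangle^{1/2}$. For any nonempty $J_s$,
\[ \big\langle \mathbf{1}_{J_s},A\mathbf{1}_{J_s}\big\rangle=\sum_{k,\ell\in J_s} t^{|\beta_k-\beta_\ell|}=|J_s|\cdot S\big(t,\{\beta_k\}_{k\in J_s}\big)\le |J_s|\cdot\Gamma_t(|J_s|)\le |J_s|\,G, \]
and the left side is $0$ when $J_s=\emptyset$. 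Substituting, the double integral is bounded by $G\big(\int_0^\infty\sqrt{|J_s|}\,ds\big)^2$.

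It then remains to estimate $\int_0^\infty\sqrt{|J_s|}\,ds$. Sorting the entries as $c_{(1)}\ge\cdots\ge c_{(N)}\ge 0=:c_{(N+1)}$, one has $|J_s|=m$ for $s\in[c_{(m+1)},c_{(m)})$, so by Abel summation
\[ \int_0^\infty\!\sqrt{|J_s|}\,ds=\sum_{m=1}^N \sqrt m\,\big(c_{(m)}-c_{(m+1)}\big)=\sum_{m=1}^N\big(\sqrt m-\sqrt{m-1}\big)c_{(m)}\le \sum_{m=1}^N \frac{c_{(m)}}{\sqrt m}, \]
using $\sqrt m-\sqrt{m-1}\le m^{-1/2}$; a further application of Cauchy--Schwarz bounds this by $\|c\|_2\big(\sum_{m=1}^N m^{-1}\big)^{1/2}\le \|c\|_2(1+\log N)^{1/2}$. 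Combining everything, $\sum_{k,\ell} t^{|\beta_k-\beta_\ell|}c_kc_\ell\le (1+\log N)\,G\,\|c\|_2^2$, and taking the supremum over $c\ne 0$ and over all $N$-tuples $\beta_1,\dots,\beta_N$ yields $\Lambda_t(N)\le (1+\log N)\max_{1\le n\le N}\Gamma_t(n)$. This is slightly stronger than the asserted bound, since $1+\log N\le [\log N]+2\le (e^2+1)([\log N]+2)$.

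I do not expect a serious obstacle: the entire argument hinges on the single fact that the matrix is positive definite (Lemma~\ref{quadratic}), which is precisely what makes the reduction from a general quadratic form to the family $\{\Gamma_t(n)\}_{n\le N}$ legitimate; the remaining steps are an elementary layer--cake splitting and the harmonic-sum bound $\sum_{m\le N}m^{-1}\le 1+\log N$. An alternative, slightly lossier route --- presumably the source of the constant $e^2+1$ --- is to partition $\{1,\dots,N\}$ into the $O(\log N)$ blocks $\{k:\ e^{-(m+1)}<c_k\le e^{-m}\}$ together with a tail block $\{k:\ c_k\le 1/N\}$, to bound the $A$-seminorm of each block by $e\,G^{1/2}$ times its $\ell^2$-norm via the same Cauchy--Schwarz step, and to sum.
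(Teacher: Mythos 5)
Your proof is correct, and it is genuinely different from the paper's. The paper partitions the coefficients into the $O(\log N)$ discrete dyadic blocks $\mathcal C_\ell=\{k:\ e^{-\ell-1}<c_k\le e^{-\ell}\}$ plus a small tail $\{k:\ c_k\le N^{-1}\}$, applies Cauchy--Schwarz pointwise in $z$ to the holomorphic polynomial $\sum_j c_j z^{\beta_j}$ to split $\big|\sum_j c_j z^{\beta_j}\big|^2$ across those $[\log N]+2$ pieces, and only then integrates against the Poisson kernel and invokes \eqref{identity}. You replace all of that by a continuous layer-cake decomposition $c_k=\int_0^\infty\mathbf 1_{\{s<c_k\}}\,ds$, and instead of re-entering the Poisson representation you use only the one abstract consequence of Lemma~\ref{quadratic}, namely that $A=(t^{|\beta_k-\beta_\ell|})$ is positive semidefinite, together with the generalized Cauchy--Schwarz $\langle x,Ay\rangle\le\langle x,Ax\rangle^{1/2}\langle y,Ay\rangle^{1/2}$; the remaining steps are the Abel summation $\int_0^\infty\sqrt{|J_s|}\,ds=\sum_m(\sqrt m-\sqrt{m-1})c_{(m)}\le\sum_m c_{(m)}/\sqrt m$ and the harmonic-sum bound. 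The continuous decomposition removes the need for the artificial $N^{-1}$ tail cutoff and for keeping track of block cardinalities, and in the end you get the cleaner estimate
\[
\Lambda_t(N)\le (1+\log N)\,\max_{1\le n\le N}\Gamma_t(n),
\]
which improves the paper's constant by roughly a factor $e^2+1\approx 8.39$ (and in particular implies the stated bound). The only point worth emphasizing, which you did note, is that the entire argument hinges on positive semidefiniteness of $A$, which is exactly what \eqref{identity} supplies; without it the seminorm Cauchy--Schwarz step would be unavailable. Your closing remark correctly identifies that the paper's route via the $e$-adic blocks is precisely what produces the factor $e^2+1$.
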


A few remarks are in order before we give the proof of this theorem. First, the result is of interest only when $t$ fails to be in $\ell^1$ because if
$t$ is in $\ell^1$, then the easy estimate
\begin{equation} \label{upper} \Lambda_t(N)\le \prod_{j=1}^{N-1} \frac{1+t_j}{1-t_j} \end{equation}
which can be obtained from the right-hand side of \eqref{identity}, will be uniformly bounded when $N\to \infty$. Note that a special version of this estimate is given in \cite[p. 152]{LS}. We will prove both \eqref{upper} and a corresponding estimate for the smallest eigenvalue of $(t^{|\beta_k-\beta_{\ell}|})$ at the end of this section, as a generalization of the result in \cite[p. 152]{LS}.

In our terminology, Dyer and Harman \cite{DH} obtained \eqref{dhsums} from the estimate
\[ \Lambda_{(p_j^{-1/2})}(N)\le C \exp \left( \frac{c \log N}{\log \log N} \right). \]
Besides the results of \cite{LS} and \cite{DH}, we are not aware of previous estimates of $\Lambda_t(N)$ for any other values of $t$.
If we combine Theorem~\ref{gcd} with Theorem~\ref{eigenvalue}, then we obtain precise estimates for $\Lambda_{(p_j^{-\alpha})}(N)$ when
$0<\alpha<1$. From G\'{a}l's theorem \eqref{galsums} and Theorem~\ref{eigenvalue} we also get
\[ \Lambda_{(p_j^{-1})}(N)\le c (\log N)(\log\log N)^2 \]
for an absolute constant $c$. A more subtle application of our estimates for GCD sums, to be given in the next section, will lead to the better bound $ \Lambda_{(p_j^{-1})}(N)\ll (\log \log N)^4$. An interesting point is that this improved estimate is obtained from Theorem~1 and does not require G\'{a}l's theorem.

As an application of our result on spectral norms, we note that we may replace $\lambda_N$ in Theorem 1.1 of \cite[p. 10]{bewe} by our quantity $\Lambda_{(p_j^{-\alpha})}(N)$ and then improve Corollary 1.2 of \cite[p. 11]{bewe} significantly by using our estimates for $\Lambda_{(p_j^{-\alpha})}(N)$.

The phenomenon captured by Theorem~\ref{general} and Theorem~\ref{eigenvalue} is interesting from a function theoretic point of view: While holomorphic polynomials $F$ of fixed $L^2$ norm (in terms of their coefficients) are uniformly bounded at any fixed point in $\D^{\infty}\cap \ell^2$ \cite{CG}, this is not so in general for the Poisson integrals of $|F|^2$. Indeed, the two theorems give a surprisingly precise statement about the relation between the growth of the number of monomials involved in the polynomials  and the growth of such Poisson integrals at points $\zeta$ in the complement of $\D^{\infty}\cap \ell^1$. We believe it could be of interest to clarify how these estimates relate to the distributional properties of polynomial chaos as studied for instance in \cite{Kw}.

Finally, we would like to emphasize the striking point that the combinatorial Lemma~\ref{divisor} seems indispensable in the deduction of our estimates for the spectral norms.

\begin{proof}[Proof of Theorem~\ref{eigenvalue}]
We will estimate the quadratic form
\[ \sum_{k,\ell=1}^N t^{|\beta_k-\beta_{\ell}|} c_k \overline{c}_{\ell} \]
for arbitrary multi-indices $\beta_1,...,\beta_N$ and vectors $c=(c_1,...,c_N)$ satisfying
$ \sum_{j=1}^N |c_j|^2=1$.
We may clearly assume that the coefficients $c_j$ are nonnegative. Set
\[ \mathcal{C}_{\ell}:=\{ j: e^{-\ell-1}<c_j \le e^{-\ell} \}. \]
By the Cauchy--Schwarz inequality, we get
\begin{equation}\label{CS}  \Big| \sum_{j=1}^N c_j z^{\beta_j}\Big|^2 \le ([\log N]+2)\left(\Big| \sum_{j: c_j\le N^{-1}} c_j z^{\beta_j}\Big|^2 + \sum_{\ell: 0 \le \ell < \log N }
 \Big| \sum_{k: k\in \mathcal{C}_{\ell}} c_k z^{\beta_k}\Big|^2\right). \end{equation}
Using \eqref{identity} and again the Cauchy--Schwarz inequality, we get
\[ \int_{\T^{K}} \Big| \sum_{j: c_j\le  N^{-1}} c_j z^{\beta_j}\Big|^2  P_{K}(t,z) d\sigma_K(z) \le 1.\]
Applying \eqref{identity} a second time, we also obtain
\[ \int_{\T^{K}} \Big| \sum_{k: k\in \mathcal{C}_{\ell}} c_k z^{\beta_k}\Big|^2  P_{K}(t,z) d\sigma_K(z) \le e^{-2\ell} (\# \mathcal{C}_{\ell})\, \Gamma_t(\# \mathcal{C}_{\ell}), \]
which, by the definition of $\mathcal{C}_{\ell}$
and the fact that $c$ is a unit vector, implies
\[ \sum_{\ell: 0 \le \ell < \log N } \int_{\T^{K}} \Big|
\sum_{k: k\in \mathcal{C}_{\ell}} c_k z^{\beta_k}\Big|^2  P_{K}(t,z)
d\sigma_K(z) \le e^2\, \max_{1\le n \le N} \Gamma_t(n).\] Returning
to \eqref{CS} and making a final application of \eqref{identity},
we obtain the desired result
\[ \Lambda_t(N)\le ([\log N] +2)(1+e^2)\max_{1\le n \le N}\Gamma_t(n).\]
\end{proof}
Let now $\lambda_t(N)$ denote the infimum of the smallest
eigenvalues of  the generalized GCD matrices
$(t^{|\beta_k-\beta_l|})$ for fixed $N$. We obtain then the
following generalization of the theorem in \cite[p. 152]{LS}.

\begin{theorem}\label{eigenvalue2}
We have
\begin{equation}\label{lowerupper} \prod_{j=1}^{N-1} \frac{1-t_j}{1+t_j}\le
\lambda_t(N)\le \Lambda_t(N)\le \prod_{j=1}^{N-1}
\frac{1+t_j}{1-t_j} \end{equation} whenever $x=(x_j)$ is a
decreasing sequence of positive numbers in  $\D^{\infty}$.
\end{theorem}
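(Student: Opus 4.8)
The plan is to derive both the upper and the lower bound directly from the integral identity \eqref{identity} in Lemma~\ref{quadratic}, without any recourse to the combinatorial machinery of Section~\ref{sect3}; the point is that the Poisson integral representation already encodes the spectrum of the matrix $(t^{|\beta_k-\beta_{\ell}|})$ through a simple two-sided bound on the Poisson kernel. Concretely, set $K=\max_j R(\beta_j)$ and recall from the factorization used in the proof of Theorem~\ref{general} that
\[ P_K(t,z)=\prod_{k=1}^K(1-t_k^2)\,\Big|\sum_{\beta:\,R(\beta)\le K}t^{\beta}z^{\beta}\Big|^2. \]
The first step is to observe the elementary pointwise bounds
\[ \prod_{k=1}^K\frac{1-t_k}{1+t_k}\ \le\ P_K(t,z)\ \le\ \prod_{k=1}^K\frac{1+t_k}{1-t_k}\qquad\text{for all }z\in\T^K, \]
which follow coordinatewise from $1-|\zeta_k|^2=(1-t_k)(1+t_k)$ in the numerator and the trivial estimate $(1-t_k)^2\le|1-\overline{t_k}z_k|^2\le(1+t_k)^2$ in the denominator of each factor of $P_K$. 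Since $t$ is decreasing and $K\le$ (the relevant index range), and the bound is monotone in $K$, we may enlarge the products to run up to $N-1$; here one uses that at most $N-1$ distinct primes/coordinates are genuinely involved, exactly as in the reduction at the start of the proof of Theorem~\ref{general}, or more simply that adding extra factors with $0<t_j<1$ only weakens the inequalities in the right direction.

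The second step is to plug these kernel bounds into \eqref{identity}. For the upper bound, for any unit vector $c$,
\[ \sum_{k,\ell=1}^N t^{|\beta_k-\beta_{\ell}|}c_k\overline{c}_{\ell}
=\int_{\T^K}\Big|\sum_{j=1}^N c_j z^{\beta_j}\Big|^2 P_K(t,z)\,d\sigma_K(z)
\le \prod_{j=1}^{N-1}\frac{1+t_j}{1-t_j}\int_{\T^K}\Big|\sum_{j=1}^N c_j z^{\beta_j}\Big|^2 d\sigma_K(z)
=\prod_{j=1}^{N-1}\frac{1+t_j}{1-t_j}, \]
where the last equality is the orthonormality of the monomials $z^{\beta_j}$ (the $\beta_j$ being distinct). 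Taking the supremum over unit vectors $c$ and over $N$-tuples of distinct multi-indices gives $\Lambda_t(N)\le\prod_{j=1}^{N-1}(1+t_j)/(1-t_j)$. The lower bound is entirely symmetric: replacing $P_K$ by its lower bound yields
\[ \sum_{k,\ell=1}^N t^{|\beta_k-\beta_{\ell}|}c_k\overline{c}_{\ell}\ \ge\ \prod_{j=1}^{N-1}\frac{1-t_j}{1+t_j}, \]
and taking the infimum over unit vectors and multi-indices gives $\lambda_t(N)\ge\prod_{j=1}^{N-1}(1-t_j)/(1+t_j)$. The middle inequality $\lambda_t(N)\le\Lambda_t(N)$ is trivial (the smallest eigenvalue of a matrix never exceeds the largest), which chains the four quantities together into \eqref{lowerupper}.

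I do not expect a serious obstacle here — the argument is short and the only mild subtlety is the bookkeeping in the first step, namely justifying that the products may be taken up to $N-1$ uniformly in the choice of $\beta_1,\dots,\beta_N$ and in $K$. The clean way is to note that $(1+t)/(1-t)\ge 1$ and $(1-t)/(1+t)\le 1$ for $0<t<1$, so extending a product of such factors is always ``safe'' in the direction we need, provided we know the genuinely relevant coordinates number at most $N-1$; and if one prefers to avoid even that, one can instead invoke that distinct multi-indices $\beta_1,\dots,\beta_N$ can involve at most $N-1$ coordinates after the harmless support-reduction already recorded in Lemma~\ref{divisor}. Either route makes the bound uniform in the supremum/infimum and completes the proof. (One should also remark, as the text preceding the theorem does, that when $t\in\ell^1$ both extreme products converge, so $\lambda_t(N)$ and $\Lambda_t(N)$ stay bounded away from $0$ and $\infty$; this is an immediate consequence and needs no further argument.)
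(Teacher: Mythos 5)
Your argument correctly identifies the paper's underlying mechanism: the pointwise two‑sided bound on the Poisson kernel together with the integral identity \eqref{identity} and the orthonormality of the monomials. The one place where the proposal breaks down, however, is precisely the only delicate point: reducing the integration from $\T^K$, $K=\max_j R(\beta_j)$, to $\T^{N-1}$. You justify the needed inequality $K\le N-1$ by appealing to "the reduction at the start of the proof of Theorem~\ref{general}" and to "the harmless support-reduction already recorded in Lemma~\ref{divisor}." That support reduction is stated and proved only for $S(t,B)$, i.e.\ for the coefficient choice $c_k\equiv 1$, and the paper explicitly warns just after Lemma~\ref{quadratic} that this restriction "is crucial for the combinatorial argument that leads to Lemma~\ref{divisor}." For $\Lambda_t(N)$ and $\lambda_t(N)$ the supremum and infimum run over arbitrary coefficient vectors $c$, so Lemma~\ref{divisor} does not apply. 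Even where it does apply, it achieves the support reduction at the cost of replacing $t$ by $\eta(t)$, which inflates small entries by a factor of two and would not yield the products with the sequence $t$ itself. Finally, $K>N-1$ really can happen (take $\beta_j=e_j$, $j=1,\dots,N$, so $K=N$), and in that regime your fallback "adding extra factors only weakens the inequalities in the right direction" points the wrong way: the crude bound $\prod_{j=1}^K\frac{1+t_j}{1-t_j}$ is then \emph{larger} than the claimed $\prod_{j=1}^{N-1}\frac{1+t_j}{1-t_j}$, so you cannot simply extend the product, and the lower bound $\prod_{j=1}^K\frac{1-t_j}{1+t_j}$ is then \emph{smaller} than the claimed one.

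For comparison, the paper's own proof is extremely compressed on exactly this point: it asserts, without elaboration, that "it suffices to integrate over an $(N-1)$-circle to compute the $L^2(\sigma_K)$-norm of a function of the form $\sum_{j=1}^N c_j z^{\beta_j}$," and then invokes the extremes of $P_{N-1}$. Whatever device the authors have in mind, it is not the combinatorial reduction of Lemma~\ref{divisor}; your proposal therefore supplies an incorrect justification for the step rather than reproducing the paper's argument, and the part you do justify rigorously — the kernel sandwich with products over $1\le j\le K$ and the trivial inequality $\lambda_t(N)\le\Lambda_t(N)$ — only yields \eqref{lowerupper} under the unproven hypothesis $K\le N-1$.
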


\begin{proof} Note first that the expressions to the left and to the right
are respectively the minimum and the maximum of $P_{N-1}(t,z)$ when
$z$ varies  over $\T^{N-1}$. Thus the estimates in
\eqref{lowerupper} follow from \eqref{identity} if we first make the
observation that it suffices to integrate over an $(N-1)$-circle to
compute the $L^2(\sigma_K)$-norm of a function of the form
$\sum_{j=1}^{N} c_j  z^{\beta_j}$. \end{proof}

\section{A Carleson--Hunt-type inequality }
\label{section5}

We have now come to our main application of Theorem~\ref{gcd}, namely to establish a Carleson--Hunt-type inequality.
To this end, we will require the following special case of the classical Carleson--Hunt inequality \cite[Theorem 1]{hunt}.
\begin{lemma} \label{carleson}
There exists an absolute constant $c$ such that
$$
\int_0^1 \left( \max_{1 \leq M \leq N} \left| \sum_{k=1}^M c_k \cos
2 \pi k x  \right| \right)^2 dx \leq c  \sum_{k=1}^N c_k^2
$$
for every finite sequence $(c_k)_{1 \leq k \leq N}$.
\end{lemma}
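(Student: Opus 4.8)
The plan is to reduce the claimed maximal inequality for the trigonometric system to the classical Carleson--Hunt maximal inequality in $L^2$, which for the partial sums $S_M f = \sum_{k \le M} \hat f(k) e^{2\pi \im k x}$ of an $L^2$ function $f$ states that $\|\sup_M |S_M f|\|_{L^2} \le c \|f\|_{L^2}$. First I would set $f(x) = \sum_{k=1}^N c_k \cos 2\pi k x$, which is a trigonometric polynomial of degree $N$ with real coefficients, so that $\hat f(k) = \hat f(-k) = c_k/2$ for $1 \le k \le N$, $\hat f(0) = 0$, and $\|f\|_{L^2}^2 = \frac12 \sum_{k=1}^N c_k^2$ by Parseval.

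Next I would express the symmetric partial sum $\sum_{k=1}^M c_k \cos 2\pi k x$ in terms of the partial sums $S_M f$ of the exponential Fourier series. Ordering the exponential basis as $\dots, e^{-2\pi \im x}, 1, e^{2\pi \im x}, \dots$, one has $S_M f(x) - S_{-M-1}f(x) = \sum_{|k|\le M} \hat f(k) e^{2\pi \im k x} = \sum_{k=1}^M c_k \cos 2\pi k x$ for $0 \le M \le N$ (the $k=0$ term vanishes). Hence $\bigl|\sum_{k=1}^M c_k \cos 2\pi k x\bigr| \le \sup_{L} |S_L f(x)| + \sup_{L}|S_{-L-1}f(x)| \le 2 \sup_{L \in \Z} |S_L f(x)|$, and taking the maximum over $1 \le M \le N$ preserves this. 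Therefore $\max_{1\le M \le N}\bigl|\sum_{k=1}^M c_k \cos 2\pi k x\bigr| \le 2\, \mathcal{S}^* f(x)$ where $\mathcal{S}^* f$ is the full Carleson maximal operator.

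Finally, applying the Carleson--Hunt theorem \cite[Theorem 1]{hunt} in $L^2$ gives $\int_0^1 (\mathcal S^* f)^2 \, dx \le c_0 \|f\|_{L^2}^2$ for an absolute constant $c_0$, and combining with the two preceding displays yields
\[
\int_0^1 \Bigl( \max_{1\le M \le N} \Bigl|\sum_{k=1}^M c_k \cos 2\pi k x\Bigr| \Bigr)^2 dx \le 4 c_0 \|f\|_{L^2}^2 = 2 c_0 \sum_{k=1}^N c_k^2,
\]
which is the assertion with $c = 2c_0$. The only mild subtlety, rather than a genuine obstacle, is bookkeeping the relation between symmetric cosine partial sums and two-sided exponential partial sums and absorbing the resulting constant factors; everything else is a direct invocation of the cited $L^2$ form of Carleson--Hunt.
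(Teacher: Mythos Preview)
Your argument is correct. The paper does not give its own proof of this lemma at all: it simply states the inequality as ``the following special case of the classical Carleson--Hunt inequality \cite[Theorem 1]{hunt}'' and moves on. Your write-up supplies exactly the routine reduction the paper leaves implicit, namely that the cosine partial sums are dominated by the Carleson maximal function of the trigonometric polynomial $f(x)=\sum_{k=1}^N c_k\cos 2\pi kx$, after which the $L^2$ boundedness of that maximal operator yields the claim. (In fact, since $f$ is even, the symmetric partial sum $S_M f$ already equals $\sum_{k=1}^M c_k\cos 2\pi kx$, so one could skip the one-sided bookkeeping; but your version is fine and the extra factor is harmless.)
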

Our generalized version of this inequality reads as follows (as in the introduction we write $f \in \bv$ for a function which has bounded variation on $[0,1]$).
\begin{lemma} \label{Jlemma}
For every function $f$ satisfying \eqref{f1} and either $f \in \bv$ or $f \in
\lip12$, there exists a constant $c$ such that the following holds. For every finite and strictly increasing sequence of
positive integers $(n_k)_{1 \leq k \leq N}$ and every associated finite sequence of real numbers $(c_k)_{1 \leq k \leq N}$, we have \begin{equation}\label{ch}
\int_0^1 \left( \max_{1 \leq M \leq N} \left| \sum_{k=1}^M c_k f(n_k
x)  \right| \right)^2 dx \leq c \left(\log\log N\right)^4
\sum_{k=1}^N c_k^2.
\end{equation}
\end{lemma}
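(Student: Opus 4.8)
The plan is to reduce the general case to the classical Carleson--Hunt inequality (Lemma~\ref{carleson}) by expanding $f$ in its Fourier series and using the GCD sum bound from Theorem~\ref{gcd} (with $\alpha=1/2$) to control the interaction between different frequencies. Write $f(x)=\sum_{j=1}^\infty \left(a_j\cos 2\pi jx+b_j\sin 2\pi jx\right)$; since $f\in\bv$ or $f\in\lip12$, the Fourier coefficients satisfy $a_j,b_j=\mathcal{O}(1/j)$ in the $\bv$ case and $\sum_j(a_j^2+b_j^2)j<\infty$ (indeed $a_j,b_j=\mathcal{O}(j^{-1/2})$ on average) in the $\lip12$ case; in either case the key quantity is $\Delta:=\sum_{j=1}^\infty\sqrt{a_j^2+b_j^2}\cdot j^{-1/2}<\infty$ up to logarithmic factors — more precisely, one splits the series dyadically and uses that both classes give essentially the same summability. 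Then $\sum_{k=1}^M c_kf(n_kx)=\sum_{j=1}^\infty\left(a_j\sum_{k=1}^Mc_k\cos 2\pi jn_kx+b_j\sum_{k=1}^Mc_k\sin 2\pi jn_kx\right)$, so
\[
\max_{1\le M\le N}\left|\sum_{k=1}^Mc_kf(n_kx)\right|\le\sum_{j=1}^\infty\left(|a_j|\,G_j(x)+|b_j|\,H_j(x)\right),
\]
where $G_j(x)=\max_{1\le M\le N}\left|\sum_{k=1}^Mc_k\cos 2\pi jn_kx\right|$ and similarly for $H_j$.

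The core of the argument is to bound $\int_0^1 G_j(x)^2\,dx$. The frequencies $jn_1,jn_2,\dots,jn_N$ are distinct positive integers (since the $n_k$ are distinct), so by Lemma~\ref{carleson} applied to the sequence supported on those frequencies, $\int_0^1 G_j^2\,dx\le c\sum_{k=1}^Nc_k^2$ — but this crude bound is not good enough when summed against $\sum_j|a_j|$, which may only be $\mathcal{O}(\log N)$ in the $\bv$ case. The refinement is that one must not treat each $j$ separately with the trivial bound; instead one groups the Fourier frequencies dyadically, $j\in[2^\nu,2^{\nu+1})$, and for each block applies Lemma~\ref{carleson} to the \emph{combined} trigonometric polynomial $\sum_{j\in[2^\nu,2^{\nu+1})}\sum_k c_k\left(a_j\cos+b_j\sin\right)2\pi jn_kx$. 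The frequencies $jn_k$ appearing in one such block need not be distinct across different $j$, and this is exactly where the GCD sum enters: the $L^2$ norm of that combined polynomial, after expanding the square and integrating, is controlled by $\sum_{j,j'\in[2^\nu,2^{\nu+1})}\sum_{k,\ell}c_kc_\ell a_ja_{j'}\mathds{1}[jn_k=j'n_\ell]$, and the number of collisions $jn_k=j'n_\ell$ is governed by $\gcd(n_k,n_\ell)$. A Cauchy--Schwarz step reduces the diagonal-in-blocks contribution to $\left(\sum_{k,\ell}\frac{\gcd(n_k,n_\ell)}{\sqrt{n_kn_\ell}}\right)^{1/2}$-type factors, and Theorem~\ref{gcd} with $\alpha=1/2$ bounds $\frac1N\sum_{k,\ell}\gcd(n_k,n_\ell)/\sqrt{n_kn_\ell}$ by $\exp\!\left((1+\varepsilon)50\cdot\tfrac12\sqrt{\log N\log\log N}\,\right)$ — wait, that is too large; rather the relevant bound must be applied to subsystems of size $n\le N$ and the $\Gamma_t(n)$ machinery (the eigenvalue estimate, Theorem~\ref{eigenvalue}, or directly $\Gamma_{(p_j^{-1/2})}$-type sums over sub-blocks) yields that each dyadic block contributes a factor bounded by $c\,(\log\log N)^2\sum_k c_k^2$ after the per-block coefficient weight $\sum_{j\in[2^\nu,2^{\nu+1})}(a_j^2+b_j^2)$ is divided out.

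Assembling: by Cauchy--Schwarz over the $\mathcal{O}(\log N)$-many dyadic blocks (with a weight chosen so the block count contributes only a bounded or $\log\log N$ factor — here one uses that $\bv$ and $\lip12$ give $\sum_\nu 2^\nu\cdot\max_{j\sim 2^\nu}(a_j^2+b_j^2)<\infty$ so that $\sum_\nu\left(\sum_{j\sim2^\nu}(a_j^2+b_j^2)\right)^{1/2}$ converges), one gets
\[
\int_0^1\left(\max_{1\le M\le N}\left|\sum_{k=1}^Mc_kf(n_kx)\right|\right)^2dx\le c\,(\log\log N)^4\sum_{k=1}^Nc_k^2,
\]
where two powers of $\log\log N$ come from the per-block GCD/eigenvalue bound and the other two from squaring the sum over blocks (or from a second Cauchy--Schwarz balancing the blocks). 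I expect the \textbf{main obstacle} to be the bookkeeping in the dyadic decomposition of the Fourier side: one must simultaneously (i) keep the collisions $jn_k=j'n_\ell$ within a block expressible through $\gcd(n_k,n_\ell)$ so Theorem~\ref{gcd} applies, (ii) make sure the bound is uniform over the partial-sum index $M$, which is where Lemma~\ref{carleson} rather than a plain $L^2$ bound is essential, and (iii) verify that the $\bv$ and $\lip12$ coefficient decay are both exactly strong enough — this is the delicate point that makes $\lip12$ a genuine borderline case — to absorb the $\mathcal{O}(\log N)$ number of blocks into at most $(\log\log N)^4$. Everything else (the Euler-product/eigenvalue estimates, the prime number theorem input) is inherited from the earlier sections.
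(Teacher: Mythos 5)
Your overall strategy -- expand $f$ in Fourier series, use Lemma~\ref{carleson} on the low-frequency part, and use GCD sums for the collisions $jn_k=j'n_\ell$ in the high-frequency part -- is the right skeleton and matches the paper's. But the proposal has a genuine gap at its center, which you half-noticed yourself: plugging $\alpha=1/2$ into Theorem~\ref{gcd} gives a factor $\exp\!\left(c\sqrt{\log N\log\log N}\right)$, which is exponentially large and unusable, and neither the dyadic grouping nor the appeal to ``$\Gamma_t(n)$/eigenvalue machinery'' resolves this. (Using $\Lambda_{(p_j^{-1})}(N)$ to supply a $(\log\log N)^2$-type per-block factor would be circular: Theorem~\ref{eigenvalue} plus G\'al's theorem only gives $\Lambda_{(p_j^{-1})}(N)\ll(\log N)(\log\log N)^2$, and the improved bound $\Lambda_{(p_j^{-1})}(N)\ll(\log\log N)^4$ is itself a \emph{consequence} of Lemma~\ref{Jlemma}, as the paper points out.)

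The paper's actual mechanism, which is missing from your argument, is this. Split $f=p+r$ at a single frequency cutoff $J$, where $p=\sum_{j\le J}a_j\cos 2\pi jx$. The polynomial part is handled exactly as you suggest with Carleson--Hunt term by term and costs a factor $\log J$ (using $|a_j|\le c/j$ in the $\bv$ case, or the dyadic $\ell^1$ bound $\sum_{j\le J}|a_j|\le c\log J$ in the $\lip12$ case). The crucial idea for the tail $r$ is that the collision sum $\sum_{j_1,j_2>J}(j_1j_2)^{-1}\delta_{j_1 v=j_2 w}$ admits \emph{two} bounds: the Koksma bound $\lesssim \gcd(v,w)^2/(vw)$, and a refined bound $\lesssim \gcd(v,w)/(J\sqrt{vw})$ that exploits the restriction $j_1,j_2>J$. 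Interpolating these with exponents $1-\ve$ and $\ve$ gives $J^{-\ve}\gcd(v,w)^{2-\ve}/(vw)^{1-\ve/2}$. This is fed into Theorem~\ref{gcd} at $\alpha=1-\ve/2$ -- that is, with $\alpha$ \emph{close to $1$}, not $1/2$ -- where the bound is $\exp\!\left(\frac{c}{\ve}(\log N)^{\ve/2}\right)$. Choosing $\ve=1/\log\log N$ makes $(\log N)^{\ve/2}=e^{1/2}=O(1)$, so this exponential is bounded, and then $J$ is chosen so large (via $J^{\ve/2}=(\log N)^{1/2}\exp(2\hat c/\ve\cdot(\log N)^{\ve/2})$) that $J^{-\ve/2}$ beats the $\log N$ coming from the Rademacher--Menshov upgrade to a maximal inequality. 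With this $J$ one computes $\log J=O((\log\log N)^2)$, and squaring the polynomial-part bound produces the final $(\log\log N)^4$. Note also that the paper sorts the \emph{coefficients} $c_k$ into dyadic levels (as in the proof of Theorem~\ref{eigenvalue}), not the Fourier frequencies; your dyadic decomposition of the Fourier side only appears in the $\lip12$ case and there serves a different purpose (replacing the pointwise $|a_j|\le c/j$, which fails for $\lip12$, by the block-$\ell^2$ bound $\sum_{j\sim 2^m}a_j^2\lesssim 2^{-m}$). Finally, the maximal function on the remainder is obtained from the $L^2$ bound on segments $\sum_{k=M_1+1}^{M_2}$ by the Rademacher--Menshov trick, a step your proposal omits.
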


We do not know whether the exponent of $\log\log N$ is optimal in \eqref{ch}, but the following argument shows that it can not be smaller than 2 for $f$ in $\bv$: If we choose $f(x)=\{x\}-1/2$, then we have the identity \[\int_0^1 f(mx) f(nx) dx= \frac{1}{12} \frac{(\gcd(m, n))^2}{mn},\]
which has been first stated by Franel \cite{franel} and first proved by Landau \cite{landau}. Consequently for this particular function $f$
the left-hand side of \eqref{ch} exceeds
\[ \frac{1}{12} \sum_{k, \ell=1}^N \frac{(\gcd(n_k, n_\ell))^2}{n_k n_\ell} c_k c_\ell.\]
By the optimality of G\'{a}l's theorem \eqref{galsums}, we know that $\Lambda_{(p_j^{-1})}(N)\gg (\log\log N)^2$ in the terminology of the preceding section, and therefore 2 is a lower bound for the exponent. This can also be seen from Hilberdink's computation of the spectral norm of the GCD matrix $((\gcd(m,n))^2/(m,n))_{m,n=1}^N$ (see \cite{Hi}).

The argument just given also shows that Lemma~\ref{Jlemma} implies that $\Lambda_{(p_j^{-1})}(N)\ll (\log\log N)^4$, as announced in the preceding section. Since the maximal operator appearing in Lemma~\ref{Jlemma} is not needed in the computation of the spectral norm, one may suspect that we could do better if our sole goal was to estimate $\Lambda_{(p_j^{-1})}(N)$. However, the proof given below does not give any better bound if we remove the maximal operator on the left-hand side of \eqref{ch}.

Before turning to the proof of Lemma~\ref{Jlemma}, we introduce the following conventions. We write $c$ for appropriate positive
constants, not  always the same, which may depend on $f$, but not on
$N$ or anything else. Any additional dependence is made explicit; we may sometimes, for example, write $c(\ve)$ instead of $c$. We will use the
notation
$$
\left\| g \right\| := \left( \int_0^1 \left( g(x)\right)^2 dx \right)^{1/2},
$$
where $g$ is assumed to be a real-valued function.

\begin{proof}[Proof of Lemma \ref{Jlemma}]
Let $f$ be any function satisfying \eqref{f1}, and assume that either $f \in \bv$ or $f \in \lip12$. To  simplify the exposition, we assume that $f$ is even so that its Fourier series is a pure cosine-series:
$$
f(x) \sim \sum_{j=1}^\infty a_j \cos 2 \pi j x.
$$
Under the assumption that $\sum_k c_k^2=1$, the coefficients $c_k$ satisfying $|c_k|\le N^{-2}$ will give a negligible contribution to the left-hand side of our maximal inequality. We may therefore assume without loss of generality that $N^{-2}\le |c_k|\le 1$.

To make our proof as transparent as possible, we will first prove Lemma \ref{Jlemma} when $f \in \bv$. The proof for $f \in \lip12$ is technically more involved and will be given subsequently. In what follows, we will use the notation
\[ \delta_i=\begin{cases} 1 & \textrm{for $ i=0$} \\
                                       0 & \textrm{otherwise.}\end{cases}
                                       \]

\noindent \emph{Proof in the case $f \in \bv:~$} By \cite[p.~48]{zt}, the Fourier coefficients $a_j$ of a function $f$ in $\bv$  satisfy
\begin{equation} \label{aj2}
|a_j| \leq c j^{-1}, \qquad j \geq 1.
\end{equation}
Set
\begin{equation} \label{pr}
p(x) = \sum_{j=1}^J a_j \cos 2 \pi j x, \qquad r(x) = f(x) - p(x),
\end{equation}
where $J$ will be chosen later. Then, by Minkowski's inequality,
\begin{equation} \label{equ2}
\left\| \max_{1 \leq M \leq N} \left| \sum_{k=1}^M c_k f(n_k x) \right| \right\| \leq \left\| \max_{1 \leq M \leq N} \left| \sum_{k=1}^M c_k p(n_k x) \right| \right\| + \left\| \max_{1 \leq M \leq N} \left| \sum_{k=1}^M c_k r(n_k x) \right| \right\|.
\end{equation}
By \eqref{aj2} and Lemma \ref{carleson}, we have
\begin{eqnarray}
 \left\| \max_{1 \leq M \leq N} \left| \sum_{k=1}^M c_k p(n_k x) \right| \right\| &\le& \sum_{j=1}^J |a_j| \left\| \max_{1 \leq M \leq N} \left| \sum_{k=1}^M c_k \cos 2 \pi j n_k x \right| \right\| \nonumber \\
&  \leq &  c (\log J) \left( \sum_{k=1}^N c_k^2 \right)^{1/2}.   \label{JJ}
 \end{eqnarray}
Estimating the second term on the right-hand side of \eqref{equ2} is more difficult. Let arbitrary numbers $0 \leq M_1 < M_2 \leq N$ be given. We want to find a good estimate for
\begin{equation} \label{asin}
\left\|\sum_{k=M_1+1}^{M_2} c_k r(n_k x) \right\|.
\end{equation}
We now sort the coefficients by size in the same way as we did in the proof of Theorem~\ref{eigenvalue}. Hence, for every $\ell$ in $\left\{0, \left \lceil 2 \log N \right\rceil \right\}$, we define
\begin{equation} \label{kell}
\mathcal{K}_{\ell} := \left\{ k:~M_1 < k \leq M_2 \quad \textrm{and} \quad e^{-\ell-1}<|c_k|\le e^{-\ell} \right\}.
\end{equation}
As observed above, we may assume that $N^{-2} \leq |c_k| \leq 1$ for $1 \leq k \leq N$. Thus
$$
\sum_{\ell=0}^{\left\lceil 2 \log N \right\rceil} \sum_{k \in \mathcal{K}_{\ell}} c_k r(n_k x) = \sum_{k=M_1+1}^{M_2} c_k r(n_kx).
$$
Now let an arbitrary $\ell$ in $\{0, \left\lceil 2 \log N \right\rceil\}$ be fixed, and set $N_{\ell}: = \# \mathcal{K}_{\ell}$. By \eqref{aj2} and the orthogonality of the trigonometric system, we have
\begin{eqnarray}
\int_0^1 \left( \sum_{k \in \mathcal{K}_{\ell}} c_k r(n_k x) \right)^2 dx & = & \frac{1}{2} \sum_{k_1,k_2 \in \mathcal{K}_{\ell}} ~\sum_{j_1,j_2 = J+1}^\infty c_{k_1} c_{k_2} a_{j_1} a_{j_2} ~\delta_{j_1 n_{k_1}-j_2 n_{k_2}} \nonumber \\
& \leq & c e^{-2\ell} \sum_{k_1,k_2 \in \mathcal{K}_{\ell}} ~\sum_{j_1,j_2 = J+1}^\infty (j_1 j_2)^{-1} ~\delta_{j_1 n_{k_1}-j_2 n_{k_2}}. \label{int1}
\end{eqnarray}
Let $v<w$ be two positive integers. Then, following an argument of Koksma \cite{K}, we have
\begin{eqnarray}
\sum_{j_1,j_2 = J+1}^\infty (j_1 j_2)^{-1} ~\delta_{j_1 v-j_2 w} & \leq & \sum_{j_1,j_2 = 1}^\infty (j_1 j_2)^{-1} ~\delta_{j_1 v-j_2 w} \nonumber\\
& = & \sum_{j=1}^\infty \frac{1}{j^2} \frac{\gcd(v,w)}{v} \frac{\gcd(v,w)}{w} \nonumber\\
& \leq & 2~ \frac{\gcd(v,w)^2}{vw}. \label{e1}
\end{eqnarray}
On the other hand, as in \cite[p.~104]{AMZ}, we have
\begin{eqnarray}
\sum_{j_1,j_2 = J+1}^\infty (j_1 j_2)^{-1} ~\delta_{j_1 v-j_2 w} & = & \sum_{j \geq \lceil (J+1) \gcd(v,w)/v \rceil} \frac{(\gcd(v,w))^2}{j^2 vw} \nonumber\\
& \leq & \frac{2}{\lceil (J+1) \gcd(v,w)/v \rceil} \frac{(\gcd(v,w))^2}{vw} \nonumber\\
& \leq & \frac{2}{J} \frac{\gcd(v,w)}{w} \nonumber\\
& \leq & \frac{2}{J} \frac{\gcd(v,w)}{\sqrt{vw}}. \label{e2}
\end{eqnarray}
Let $0<\ve <1$ be a number to be chosen later.
Combining \eqref{e1} and \eqref{e2}, we obtain
\begin{eqnarray}
\sum_{j_1,j_2 = J+1}^\infty (j_1 j_2)^{-1} ~\delta_{j_1 v-j_2 w} & \leq & \left(2~ \frac{\gcd(v,w)^2}{vw}\right)^{1-\ve} \left(\frac{2}{J} \frac{\gcd(v,w)}{\sqrt{vw}}\right)^\ve \nonumber\\
& = & \frac{2}{J^\ve} \frac{\gcd(v,w)^{2-\ve}}{(vw)^{1-\ve/2}}. \label{gcd1}
\end{eqnarray}
Thus the integral in \eqref{int1} is bounded by
$$
c e^{-2\ell} \sum_{k_1,k_2 \in K(\ell)} \frac{2}{J^\ve} \frac{\gcd(n_{k_1},n_{k_2})^{2-\ve}}{(n_{k_1} n_{k_2})^{1-\ve/2}},
$$
which, by Theorem \ref{gcd} (for $\alpha = 1 - \ve/2$), is at most
$$
c e^{-2\ell} J^{-\ve}N_{\ell} \exp\left(\frac{c}{\ve} (\log N_{\ell})^{\ve/2} \right).
$$
By Minkowski's inequality,
we therefore get the following estimate for \eqref{asin}:
\begin{eqnarray*}
\left\| \sum_{k=M_1+1}^{M_2} c_k r(n_k x) \right\|
& \leq & \sum_{\ell=0}^{\lceil 2 \log N \rceil} \left\| \sum_{k \in \mathcal{K}_\ell} c_k r(n_k x) \right\| \\
& \leq & c \sum_{\ell=0}^{\lceil 2 \log N \rceil} e^{-\ell} N_{\ell}^{1/2} J^{-\ve/2} \exp\left(\, \frac{c}{\ve} (\log N_{\ell})^{\ve/2} \right).
\end{eqnarray*}
Applying the Cauchy--Schwarz inequality, we infer from this bound that
\begin{eqnarray}\label{cse}
\left\| \sum_{k=M_1+1}^{M_2} c_k r(n_k x) \right\|
\! \! & \leq & \!\!c J^{-\ve/2} (\log N)^{1/2}\! \left(\sum_{\ell=0}^{\lceil 2 \log N \rceil}\! e^{-2\ell}  N_{\ell} \right)^{1/2}\!\!\! \exp\!\left(\, \frac{\hat{c}}{\ve} (\log N)^{\ve/2} \right) \nonumber\\
& \leq & \!c J^{-\ve/2} (\log N)^{1/2} \left(\sum_{k=M_1+1}^{M_2} c_k^2 \right)^{1/2} \exp\left( \frac{\hat{c}}{\ve} (\log N)^{\ve/2} \right). \label{asin2} 
\end{eqnarray}
The constant $\hat{c}$ in (\ref{asin2}) is marked by $\hat{~}$ to indicate that its value (unlike the value of the other constants denoted by $c$) does not change in the sequel. Without loss of generality, we may assume that $\hat{c}\ge 4$.  We now choose $J$ by requiring that
\begin{equation}\label{Jdef}
J^{\ve/2}= (\log N)^{1/2}\exp \left(\frac {2\hat{c}}{\ve}(\log N)^{\ve/2}\right)
\end{equation}
so that (\ref{asin2}) becomes
$$\left\| \sum_{k=M_1+1}^{M_2} c_k r(n_k x) \right\| \le c\left( \sum_{k=M_1+1}^{M_2} c_k^2\right)^{1/2} \exp \left(-\frac {\hat{c}}{\ve}(\log N)^{\ve/2}\right).$$
Now imitating the proof of the Rademacher--Menshov inequality
(see \cite[p.~123]{loeve}), we see  that this estimate implies
\begin{eqnarray}\label{rame}
\left\| \max_{1 \leq M \leq N} \left| \sum_{k=1}^{M} c_k r(n_k x) \right| \right\| & \leq & c \log N \exp\left(- \frac{\hat{c}}{\ve} (\log N)^{\ve/2} \right) \left(\sum_{k=1}^{N} c_k^2 \right)^{1/2}. 
\end{eqnarray}
Choosing $\ve=1/(\log\log N)$ and recalling that $\hat{c}\ge 4$, we see that
the expression in (\ref{rame}) will be bounded by $c(\sum_{k=1}^N c_k^2)^{1/2}$. On the other hand,
\begin{equation}\label{ca}
\log J=\frac{1}{\ve} \log\log N+ \frac{4\hat{c}}{\ve^2} (\log N)^{\ve/2},
\end{equation}
which is less than or equal to $ c(\log\log N)^2$ with our choice of $\ve$. Thus \eqref{JJ}  becomes
$$ \left\| \max_{1 \leq M \leq N} \left| \sum_{k=1}^M c_k p(n_k x) \right| \right\|\le c (\log\log N)^2 \left( \sum_{k=1}^{N} c_k^2\right)^{1/2},
$$
which, together with \eqref{rame}, proves the lemma in the case $f \in \bv$.\vspace{2mm}

\noindent \emph{Proof in the case $f \in \lip12$~}: If $f \in \lip12$, then by \cite[p.~241]{zt} we have
\begin{equation} \label{fcoeff}
\sum_{j=2^m+1}^{2^{m+1}} a_j^2 \leq c 2^{-m}, \qquad  m \geq 0.
\end{equation}
Note that if $f \in \bv$, then \eqref{fcoeff} also holds as a consequence of \eqref{aj2}; thus the proof for the case $f \in \bv$ could have been included in the present proof. However, \eqref{fcoeff} is significantly weaker than \eqref{aj2}, which makes the proof in the present case more complicated. By the Cauchy--Schwarz inequality, \eqref{fcoeff} implies that
$$
\sum_{j=2^m+1}^{2^{m+1}} \left| a_j \right| \leq c,
$$
and hence
\begin{equation} \label{J2}
\sum_{j=1}^J |a_j| \leq c \log J
\end{equation}
for any $J \geq 1$.
Define $p,r$ as in (\ref{pr}), with $J$ to be chosen later.   
We estimate the second term on the right-hand side of \eqref{equ2}. To this end, assume that $0<\ve <1$, and set
$$
\mathcal{S}_{m}: = \left\{ 2^m < j \leq 2^{m+1}:~|a_j| \leq 2^{-m(1-\ve)} \right\}, \qquad \mathcal{T}_m := \{2^m+1, \dots, 2^{m+1}\} \backslash \mathcal{S}_{m}.
$$
Then from \eqref{fcoeff} it is clear that
\begin{equation} \label{tgr}
\# \mathcal{T}_m \leq c 2^{m-2m\ve}.
\end{equation}
Let $0 \leq M_1 < M_2 \leq N$ be given, and let $\mu$ denote the largest integer such that $2^\mu \leq J$. Replacing all coefficients by their absolute values (which is permitted due to the orthogonality of the trigonometric system), starting the summation at $2^\mu$ instead of $J$ and applying Minkowski's inequality twice we get
\[
 \left\| \sum_{k=M_1+1}^{M_2} c_k r(n_k x) \right\|
 \leq  \sum_{m=\mu}^\infty \left\| \sum_{k=M_1+1}^{M_2} \sum_{j=2^m+1}^{2^{m+1}} |a_j|~ |c_k| \cos 2 \pi j n_k x\right\|
 \qquad\qquad\qquad\qquad\qquad\]
\[ \qquad \le \sum_{m=\mu}^\infty \left( \biggl\| \sum_{k=M_1+1}^{M_2} \sum_{j \in \mathcal{S}_m} |a_j|~ |c_k| \cos 2 \pi j n_k x\biggr\| +  \biggl\| \sum_{k=M_1+1}^{M_2} \sum_{j \in \mathcal{T}_m} |a_j| ~|c_k| \cos 2 \pi j n_k x\biggr\| \right). \]
We reverse the order of summation   and use Minkowski's inequality along with  \eqref{tgr}, \eqref{fcoeff}, and the orthogonality of the trigonometric system to estimate the second norm on the right-hand side of this inequality. Using also the definition of $\mathcal{S}_m$ to deal with the first norm, we therefore get:
\begin{equation}
\left\| \sum_{k=M_1+1}^{M_2} \! \!c_k r(n_k x) \right\|
\!\le\!
 \sum_{m=\mu}^\infty \!\!\left( \biggl\| \sum_{k=M_1+1}^{M_2} \sum_{j \in \mathcal{S}_m}\! j^{-1+\ve} |c_k| \cos 2 \pi j n_k x\biggr\|\! \!+\! c2^{-m \ve}\!\!\left( \sum_{k=M_1+1}^{M_2} \!\! c_k^2 \right)^{1/2}\! \right)\!. \label{equr}
\end{equation}
Now let $m$ be fixed. We define $\mathcal{K}_{\ell}$ as in \eqref{kell}, and observe that
\begin{equation} \label{left}
\int_0^1\! \left( \sum_{k \in \mathcal{K}_{\ell}} \sum_{j \in \mathcal{S}_m} j^{-1+\ve} |c_k| \cos 2 \pi j n_k x  \right)^2\! dx
 \leq  c e^{-2\ell} \!\sum_{k_1,k_2 \in \mathcal{K}_{\ell}} \sum_{j_1,j_2 = 2^m+1}^\infty (j_1 j_2)^{-1+\ve} \delta_{j_1 n_{k_1}- j_2 n_{k_2}}. \end{equation}
Instead of \eqref{e1}, we get
\begin{eqnarray}
\sum_{j_1,j_2 = 2^m+1}^\infty (j_1 j_2)^{-1+\ve} ~\delta_{j_1 v - j_2 w} & \leq & \sum_{j=1}^\infty \frac{1}{j^{2-2\ve}} \left(\frac{\gcd(v,w)}{v} \frac{\gcd(v,w)}{w} \right)^{1-\ve} \nonumber\\
& \leq & c~ \frac{\gcd(v,w)^{2-2\ve}}{(vw)^{1-\ve}},\label{e1a}
\end{eqnarray}
and as a replacement for \eqref{e2}, we have
\begin{eqnarray}
\sum_{j_1,j_2 = 2^m+1}^\infty (j_1 j_2)^{-1+\ve} ~\delta_{j_1 v - j_2 w} & = & \sum_{j \geq \lceil (2^m+1) \gcd(v,w)/v \rceil} \frac{(\gcd(v,w))^2}{j^{2-2\ve} vw} \nonumber\\
& \leq & \frac{c}{2^{m(1-2\ve)}} \frac{(\gcd(v,w))^{1+2\ve}}{(vw)^{1/2+\ve}}. \label{e2a}
\end{eqnarray}
Combining \eqref{e1a} and \eqref{e2a} with exponents $1-2\ve$ and $2\ve$, respectively, we have
\begin{eqnarray*}
\sum_{j_1,j_2 = 2^m+1}^\infty (j_1 j_2)^{-1+\ve} ~\delta_{j_1 v - j_2 w} & \leq & c \left(\frac{\gcd(v,w)^{2-2\ve}}{(vw)^{1-\ve}} \right)^{1-2\ve} \left(\frac{1}{2^{m(1-2\ve)}} \frac{(\gcd(v,w))^{1+2\ve}}{(vw)^{1/2+\ve}}\right)^{2\ve} \\
& \leq & c 2^{-2 m \ve(1-2\ve)}  \left(\frac{\gcd(v,w)^2}{vw}\right)^{1-2\ve+4\ve^2} \\
& \leq & c 2^{-m\ve}  \left(\frac{\gcd(v,w)^2}{vw}\right)^{1-\ve} \\
\end{eqnarray*}
(where we assume w.l.o.g. that $\ve \leq 1/5$), and consequently \eqref{left} becomes
$$
\int_0^1\! \left( \sum_{k \in \mathcal{K}_{\ell}} \sum_{j \in \mathcal{S}_m} j^{-1+\ve} |c_k| \cos 2 \pi j n_k x  \right)^2\! dx \le
c e^{-2\ell} \sum_{k_1,k_2 \in K(\ell)} 2^{-m \ve} \frac{(\gcd(n_{k_1},n_{k_2}))^{2-\ve}}{(n_{k_1} n_{k_2})^{1-\ve/2}}.
$$
As in \eqref{asin2}, we therefore obtain the upper bound
\[
 \left\| \sum_{k=M_1+1}^{M_2} \sum_{j \in \mathcal{S}_m} j^{-1+\ve} |c_k| \cos 2 \pi j n_k x\right\| \qquad \qquad\qquad\qquad\qquad\qquad
 \qquad\qquad\qquad\qquad \qquad\]
 \begin{equation}
 \qquad\leq \ \ c 2^{-m\ve/2} (\log N)^{1/2} \left(\sum_{k=M_1+1}^{M_2} c_k^2 \right)^{1/2} \exp\left( \frac{c}{\ve} (\log N)^{\ve/2} \right). \label{upperb}
\end{equation}
Along with \eqref{equr} this yields
\[
 \left\| \sum_{k=M_1+1}^{M_2} c_k r(n_k x) \right\|  \leq \ c J^{-\ve/2} \left(\log N\right)^{1/2} \left(\sum_{k=M_1+1}^{M_2} c_k^2 \right)^{1/2} \exp\left(\frac{c}{\ve} (\log N)^{\ve/2} \right),
\]
which is identical to \eqref{cse}. Hence the rest of the proof can be carried out as in the case when $f \in \bv$.
\end{proof}

\begin{proof}[Proof of Theorem \ref{a1} and Theorem \ref{a2}]
Assuming the validity of Theorem \ref{a2}, the series (\ref{fseries}) converges a.e.\ for
any $(n_k)_{k\ge 1}$ and $c_k= (k\log k)^{-1/2} (\log\log k)^{-(5/2+\varepsilon)}$ ($\varepsilon>0$)
and thus by the Kronecker lemma, (\ref{1/2est}) is valid. Thus Theorem \ref{a1} follows from
Theorem \ref{a2}, and it suffices to prove Theorem \ref{a2}. Let $(n_k)_{k\ge 1}$ be
an increasing sequence of integers and $(c_k)_{k \geq 1}$ a sequence
of real numbers such that for some $\delta>0$ we have
\begin{equation*}
\sum_{k=1}^\infty c_k^2 (\log \log k)^{4 +\delta} < \infty.
\end{equation*}
Let $N_m$ be an increasing sequence of integers such that
$$ \log\log N_m \sim m^\gamma \qquad \text{with} \ \gamma\ge 6/\delta.$$
Clearly
$$
\sum_{k=N_m+1}^{N_{m+1}}c_k^2 \le (\log\log N_m)^{-(4+\delta)}\sum_{k=N_m+1}^{N_{m+1}}c_k^2
(\log\log k)^{4+\delta}\le c(\log\log N_m)^{-(4+\delta)}
$$
and thus by Lemma \ref{Jlemma} and the Chebyshev inequality we get, writing $\lambda$ for the Lebesgue measure,
\begin{eqnarray*}
& & \lambda \left( \left\{ x \in (0,1):~ \max_{N_m+1\le M\le N_{m+1}} \left| \sum_{k=N_m+1}^M c_k
f(n_k x)\right| \ge m^{-2} \right\} \right) \\
&  \le & c m^4
\left(\sum_{k=N_m+1}^{N_{m+1}} c_k^2\right)
(\log\log N_{m+1})^4 \\
&\le & c m^4 \left(\sum_{k=N_m+1}^{N_{m+1}} c_k^2\right)(\log\log
N_m)^4  \\ &\le& c m^4(\log\log N_m)^{-\delta}\le cm^{-2}.
\end{eqnarray*}
We set $S_N(x):=\sum_{k=1}^N c_k f(n_kx)$ and see that the latter estimate, along with the Borel--Cantelli lemma, yields
\begin{equation}\label{Snk}
\max_{N_m \le M \le N_{m+1}} |S_M-S_{N_m}| =\max_{N_m+1\le M\le N_{m+1}} \left| \sum_{k=N_m+1}^M c_k
f(n_k x)\right| \ll m^{-2} \qquad \text{a.e.}
\end{equation}
In particular, $\sum_{m=1}^\infty |S_{N_{m+1}}-S_{N_m}|<\infty$ a.e., which implies the a.e.\ convergence of $S_{N_m}$. Using \eqref{Snk}, we finally obtain the a.e.\ convergence of $S_N$. \end{proof}

\section{Divergence of series involving dilations of $\{x\}-1/2$}\label{section6}
We finally turn to the example showing that Theorem~\ref{a2} is essentially best possible for the class $\bv$. In what follows, we will use the notation $\varphi(x):=\{x\}-1/2.$ Our arguments will be probabilistic and
we will use the symbols $\mathbb{P}$ and $\mathbb{E}$ with respect to the unit interval equipped with Borel sets
and the Lebesgue measure.

\begin{theorem}\label{ex}
For every $0<\gamma< 2$, there exists
an increasing sequence $(n_k)_{k \geq 1}$ of positive integers and a real
sequence $(c_k)_{k \geq 1}$  such that
\[ \sum_{k=1}^{\infty} c_k^2 (\log \log k)^{\gamma} <\infty, \]
but $\sum_{k=1}^\infty
c_k \varphi(n_kx)$ is a.e.\ divergent.
\end{theorem}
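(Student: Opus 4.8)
The goal is to construct, for any $0<\gamma<2$, a sequence $(n_k)$ and coefficients $(c_k)$ with $\sum_k c_k^2 (\log\log k)^\gamma<\infty$ but $\sum_k c_k\varphi(n_kx)$ divergent a.e. The natural strategy is to exploit the optimality of G\'al's theorem: there exist finite blocks of integers for which the GCD sum with $\alpha=1$ is as large as $(\log\log N)^2$ times $N$. Via the Franel--Landau identity $\int_0^1 \varphi(mx)\varphi(nx)\,dx=\frac{1}{12}\gcd(m,n)^2/(mn)$, this means that a suitable linear combination $\sum_k c_k\varphi(n_kx)$ over such a block has $L^2$-norm of order $(\log\log N)^2 \sum c_k^2$ rather than $\sum c_k^2$. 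The plan is to assemble infinitely many such blocks, with carefully chosen sizes and amplitudes, so that each block contributes (after the ``cheap'' $L^2$ lower bound is upgraded to an a.e. lower bound via a second-moment / Paley--Zygmund argument) a fluctuation of the partial sums that does not tend to zero, while the total weighted square sum $\sum c_k^2(\log\log k)^\gamma$ still converges.

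\textbf{Key steps.} First I would fix, for each $j$, a finite set $G_j$ of integers realizing G\'al's extremal bound: $\#G_j = N_j$ and $\sum_{m,n\in G_j}\gcd(m,n)^2/(mn)\ge c N_j(\log\log N_j)^2$. Placing the elements of $G_j$ consecutively in the sequence $(n_k)$ (scaled to be larger than everything used so far — note GCD sums are dilation-invariant, so we may multiply an entire block by a large integer without changing the sum), I assign to the $k$-th term in block $j$ a coefficient $c_k=a_j$ (constant within the block), with $a_j$ and $N_j$ to be chosen. Second, I compute $\mathbb{E}\big[\big(\sum_{k\in\text{block }j} a_j\varphi(n_kx)\big)^2\big]\asymp a_j^2 N_j(\log\log N_j)^2$ from Franel--Landau and the extremal property. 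Third — the crucial analytic step — I upgrade this to a genuine a.e. lower bound on the oscillation: I want to conclude that $\max_{M}\big|\sum_{k\le M, k\in\text{block }j} a_j\varphi(n_kx)\big| \gtrsim a_j N_j^{1/2}\log\log N_j$ on a set of $x$ of measure bounded below (independent of $j$). This requires controlling higher moments, e.g. a fourth-moment estimate, or using the Paley--Zygmund inequality together with an $L^4$ bound on the block sum (which again reduces to a GCD-type sum that can be estimated, perhaps crudely, using Theorem~\ref{gcd} or an elementary divisor argument). Fourth, I choose parameters: take $N_j$ large (so that within block $j$, $\log\log k\asymp \log\log N_j$), set $a_j$ so that $a_j N_j^{1/2}\log\log N_j \asymp 1$, i.e. $a_j\asymp N_j^{-1/2}(\log\log N_j)^{-1}$; then the weighted square contribution of block $j$ is $a_j^2 N_j(\log\log N_j)^\gamma\asymp (\log\log N_j)^{\gamma-2}$, and since $\gamma<2$ I can choose the $N_j$ growing fast enough (e.g. $\log\log N_j\asymp 2^j$ works when $\gamma-2<0$) that $\sum_j (\log\log N_j)^{\gamma-2}<\infty$, giving $\sum_k c_k^2(\log\log k)^\gamma<\infty$. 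Finally, since each block produces an oscillation of the partial sums of size $\gtrsim 1$ on a set of measure $\gtrsim\delta$, and the blocks can be taken far apart, a Borel--Cantelli argument (the relevant events being, up to the independence-like behaviour, ``sufficiently independent'', or else one argues directly that $\limsup$ of the oscillation is positive a.e. by a zero-one law or by summing the measure lower bounds appropriately) shows $\sum_k c_k\varphi(n_kx)$ fails the Cauchy criterion a.e., hence diverges a.e.

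\textbf{Main obstacle.} The delicate point is the third step: passing from the $L^2$ lower bound on the block sum to an a.e. (or positive-probability, uniformly over $j$) lower bound on its maximum fluctuation. The $L^2$ bound alone does not prevent the mass from concentrating on a vanishing set, so one needs an upper bound on a higher moment — an $L^4$ estimate for $\sum_{k\in\text{block }j} a_j\varphi(n_kx)$ — to apply Paley--Zygmund. Estimating this $L^4$ norm amounts to bounding a sum of the form $\sum \big(\prod \varphi\big)$-integrals over quadruples from the block, which expands into sums governed by $\gcd$'s of the $n_k$; here one can invoke Theorem~\ref{gcd} (with $\alpha$ slightly less than $1$, giving a subexponential bound) to control the ``off-diagonal'' contributions, but some care is needed to ensure the fourth moment is not much larger than the square of the second moment, i.e. that the block sum behaves quasi-Gaussianly. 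An alternative route that sidesteps $L^4$ is to pass instead to the convergence formulation: if $\sum c_k\varphi(n_kx)$ converged a.e., then by the Rademacher--Menshov / Kolmogorov machinery the tail sums would be small a.e., contradicting the $L^2$-largeness of the block sums combined with a maximal-function argument; making this contradiction quantitative is essentially the same difficulty repackaged. I expect the cleanest writeup to go through the second-and-fourth moment (Paley--Zygmund) argument, with the $\lip12$ variant losing a $\log\log$ (hence the weaker $\log\log\log$ in the claimed $\lip12$ counterexample) because the Franel--Landau identity is special to $\varphi$ and for general $\lip12$ functions one must work with Fourier expansions, incurring an extra logarithmic loss.
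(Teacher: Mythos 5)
Your plan correctly identifies the main ingredients — Gál's extremal blocks, constant coefficients within a block, the accounting $a_j^2 N_j(\log\log N_j)^\gamma$, and the need to upgrade an $L^2$ lower bound on each block sum to a positive-probability lower bound before invoking Borel--Cantelli — and you rightly flag the upgrade as the delicate step. However, the route you sketch for that step (Paley--Zygmund via an $L^4$ bound, followed by a hand-waved independence argument for Borel--Cantelli) has two genuine gaps, and the paper's resolution is different in a way that matters.

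First, the $L^4$ estimate you need for Paley--Zygmund is not at all clear. Gál's extremal sets are highly structured (all square-free numbers built from the first $r$ primes), and expanding the fourth moment of $\sum_{k\in G_j}\varphi(n_kx)$ produces sums over quadruples governed by relations of the form $\pm j_1 n_{k_1}\pm j_2 n_{k_2}\pm j_3 n_{k_3}\pm j_4 n_{k_4}=0$; Theorem~\ref{gcd} controls bilinear GCD sums, not these quadrilinear ones, and there is no reason a priori that $\mathbb{E}[S^4]\ll(\mathbb{E}[S^2])^2$ for a single Gál block. Second, even if such a bound held, the events ``block $j$ is large'' are not independent just because the blocks are scaled far apart in frequency; the second Borel--Cantelli lemma requires genuine (near-)independence, which must be proved.

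The paper sidesteps both difficulties with a single device. Lemma~\ref{ber} (a martingale/conditional-expectation approximation going back to Berkes) shows that if the blocks $I_m$ are supported on integers with exponentially increasing dyadic scales ($p_{m+1}\ge 16q_m$, all elements of $I_m$ divisible by $2^{p_m}$), then the block sums $X_m=\sum_{k\in I_m}\varphi(k\omega)$ are $L^2$-close to genuinely independent, bounded, mean-zero random variables $Y_m$. This furnishes the independence for Borel--Cantelli. To avoid needing a sharp $L^4$ bound for a single Gál block, the paper does \emph{not} try to show one Gál block is large with positive probability; instead, at stage $m$ it places $r_m=\psi_m^3$ independent (after Lemma~\ref{ber}) scaled copies of the Gál block of size $\psi_m$, forms $Z_m=\frac{1}{\sqrt{r_m\psi_m}\log\log\psi_m}\sum_{k=1}^{r_m}Y_k^{(m)}$, and applies the central limit theorem with Berry--Esseen remainder. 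Since each $Y_k^{(m)}$ is bounded by $\psi_m$, the Berry--Esseen term is $O\bigl(r_m\psi_m^3/(r_m\psi_m(\log\log\psi_m)^2)^{3/2}\bigr)=O((\log\log\psi_m)^{-3})\to 0$, so $\mathbb{P}(Z_m\ge 1)\ge c_2>0$ for large $m$, using only the boundedness and the second-moment lower bound from Gál — no $L^4$ estimate required. Independence of the $Z_m$ then gives divergence a.e.\ by Borel--Cantelli, and the coefficient accounting matches yours (in fact the paper proves the stronger statement with $(\log\log k)^2\varepsilon_k$ for any $\varepsilon_k\to 0$). So the conceptual skeleton of your plan is right, but the crucial step you flagged is resolved by a fundamentally different mechanism (independence via dyadic conditioning $+$ CLT over many independent copies) rather than by a quasi-Gaussian $L^4$ bound for a single block, and your version would not go through without filling in precisely the pieces the paper supplies.
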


We will need the following variant of Lemma 2 of \cite{berkes}.

\begin{lemma}\label{ber} Let $1\le p_1<q_1<p_2<q_2<\dots$ be integers such that
$p_{m+1}\ge 16q_m$; let $I_1, I_2,\dots$ be sets of integers such
that $I_m\subset [2^{p_m}, 2^{q_m}]$ and each element of $I_m$ is
divisible by $2^{p_m}$. For $m \geq 1$ and $\omega \in (0,1)$ set
$$
X_m= X_m(\omega):=\sum_{k \in I_m} \varphi(k \omega).
$$
Then there exist independent random variables $Y_1,Y_2,\dots$ on the
probability space $((0,1),\mathcal B, {\mathbb P})$ such that $|Y_k|\le
\text{card}\, I_k$, $\mathbb{E}Y_k=0$ and
$$
\|X_m-Y_m\|\le 2^{-m}\qquad \textrm{for $m\ge m_0$},
$$
where $\| \cdot \|$ denotes the $L^2(0, 1)$ norm.
\end{lemma}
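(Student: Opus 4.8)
The plan is to approximate the sums $X_m$ by genuinely independent random variables by exploiting the arithmetic separation of the blocks $I_m$ together with the rapid decay of the Fourier coefficients of $\varphi$. Recall that $\varphi(x)=\{x\}-1/2$ has Fourier expansion $\varphi(x)=-\sum_{j\ge 1}\frac{\sin 2\pi j x}{\pi j}$, so that $X_m(\omega)=-\sum_{k\in I_m}\sum_{j\ge 1}\frac{\sin 2\pi j k \omega}{\pi j}$. The frequencies appearing in $X_m$ are all of the form $jk$ with $k\in I_m\subset[2^{p_m},2^{q_m}]$ and $k$ divisible by $2^{p_m}$; in particular every such frequency is divisible by $2^{p_m}$. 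First I would truncate each $X_m$ at a suitable frequency cutoff, say keeping only the terms with $j\le J_m$ for some $J_m$ growing slowly (e.g. $J_m=2^{p_{m+1}-q_m-1}$ or similar), discarding a tail whose $L^2$-norm is $O(2^{-m})$ by Parseval, since $\sum_{j>J_m}j^{-2}\ll J_m^{-1}$ and there are at most $\mathrm{card}\,I_m$ values of $k$. This gives trigonometric polynomials $\widetilde X_m$ whose nonzero frequencies lie in the range $[2^{p_m},\,J_m 2^{q_m}]$, which by the choice $p_{m+1}\ge 16 q_m$ (and a suitably modest $J_m$) are pairwise disjoint intervals as $m$ varies.

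The second step is the standard device of turning functions with disjoint Fourier spectra on $[0,1)$ into independent random variables. Concretely, the $\widetilde X_m$ are already \emph{orthogonal}, but orthogonality is weaker than independence; the classical fix (going back to the construction used in \cite{berkes} and related lacunary-series arguments) is to replace the dilation variable: since the spectrum of $\widetilde X_m$ sits in a dyadic window around $2^{p_m}$ with multiplicative width controlled by $J_m 2^{q_m-p_m}\le 2^{p_{m+1}-p_m}$, one checks that $\widetilde X_m(\omega)$ depends — up to an error controllable in $L^2$ — only on the binary digits of $\omega$ in a block $B_m$ of positions, and these blocks $B_m$ are disjoint for distinct $m$ precisely because of the gap condition $p_{m+1}\ge 16 q_m$. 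Defining $Y_m$ to be $\widetilde X_m$ with $\omega$ replaced by the random variable whose binary expansion agrees with $\omega$'s on block $B_m$ and is (say) independent uniform elsewhere makes the $Y_m$ genuinely independent; one then verifies $\|X_m-Y_m\|\le \|X_m-\widetilde X_m\|+\|\widetilde X_m-Y_m\|\le 2^{-m}$ for $m\ge m_0$, absorbing both the truncation error and the digit-block error. The bounds $|Y_m|\le \mathrm{card}\,I_m$ and $\mathbb{E}Y_m=0$ are immediate since these properties are preserved under the measure-preserving digit substitution and $\varphi$ has mean zero.

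The main obstacle, and the place where care is genuinely required, is making the ``depends essentially only on the digits in block $B_m$'' step quantitative with an $L^2$-error that beats $2^{-m}$. A frequency $n$ with $2^{p_m}\le n\le 2^{p_{m+1}-1}$ has $\sin 2\pi n\omega$ determined by the fractional part $\{n\omega\}$, which is sensitive both to the low-order digits of $\omega$ (through the carries from the high-index digits) and to digits up to position roughly $p_{m+1}$; the cleanest route is to freeze the digits of $\omega$ below position $p_m-C\log(\mathrm{card}\,I_m)$ and above position $p_{m+1}$, estimate the resulting perturbation of each $\sin 2\pi n\omega$ by the Lipschitz bound times the size of the perturbation of $\{n\omega\}$, and sum. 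I would choose the various cutoffs ($J_m$, the widths of the frozen digit zones) as explicit functions of $\mathrm{card}\,I_m$ and $m$ so that every error term is $\le 2^{-m-2}$, which forces the gap $p_{m+1}\ge 16 q_m$ to be more than enough slack; the hypothesis that each element of $I_m$ is divisible by $2^{p_m}$ is exactly what guarantees the low end of the spectrum is clean, i.e.\ that no frequency sneaks below $2^{p_m}$. Once the approximation $\|X_m-Y_m\|\le 2^{-m}$ is in hand the lemma is proved; its use in Theorem~\ref{ex} will be through Kolmogorov's three-series / divergence criterion applied to the independent sequence $(Y_m)$, with $I_m$ and the coefficients $c_k$ chosen so that $\sum c_k^2(\log\log k)^\gamma<\infty$ while $\sum_m\mathrm{Var}(\sum_{k\in I_m}c_k Y_m)=\infty$.
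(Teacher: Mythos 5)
Your proposal is in the right spirit and identifies all the right ingredients — the gap condition $p_{m+1}\ge 16q_m$ for separation of ``blocks,'' the divisibility of $I_m$ by $2^{p_m}$ to keep the low end of the spectrum clean, and the Fourier decay of $\varphi\in\bv$ to control the error. However, the paper's implementation is cleaner and more economical, and your version has some soft spots. The paper skips the Fourier truncation entirely and simply sets $Y_m:=\sum_{k\in I_m}\mathbb{E}(\varphi(k\cdot)\mid\mathcal F_m)$, where $\mathcal F_m$ is the $\sigma$-field generated by the dyadic intervals of length $2^{-16q_m}$. Independence then falls out of a periodicity observation: every $\varphi(k\cdot)$ with $k\in I_{m+1}$ is periodic with period dividing $2^{-p_{m+1}}$, and $2^{-p_{m+1}}\mid 2^{-16q_m}$, so each atom of $\mathcal F_m$ is a period interval for $\varphi(k\cdot)$ and hence for $\xi_k=\mathbb{E}(\varphi(k\cdot)\mid\mathcal F_{m+1})$. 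Thus $Y_{m+1}$ is independent of $\mathcal F_m$, while $Y_m$ is $\mathcal F_m$-measurable and the $\mathcal F_m$ are nested, giving mutual independence directly. The $L^2$ error $\|X_m-Y_m\|\ll 2^{-q_m}$ is then quoted from a ready-made lemma (Lemma~3.1 of~\cite{be76}) bounding $\|\varphi(k\cdot)-\mathbb{E}(\varphi(k\cdot)\mid\mathcal F_m)\|\ll(k2^{-16q_m})^{1/6}$, summed over the at most $2^{q_m}$ elements of $I_m$.

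Two concrete issues with your route. First, your $Y_m$ is defined by replacing the binary digits of $\omega$ outside the block $B_m$ by \emph{new, independent uniform} digits; this introduces auxiliary randomness and so your $Y_m$ are not, as the lemma requires, random variables on the original space $((0,1),\mathcal B,\mathbb P)$ alongside the $X_m$, nor is $\|X_m-Y_m\|$ meaningful without further work to put them on a common probability space. The fix is exactly the paper's move: \emph{condition} on the digits in $B_m$ (i.e.\ take $\mathbb{E}(\,\cdot\mid\sigma(B_m))$) rather than substitute, which keeps everything on $((0,1),\mathcal B,\mathbb P)$, preserves $\mathbb{E}Y_m=0$ by the tower property, and makes independence follow from disjointness of the $B_m$ with no averaging over auxiliary coordinates. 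Second, once you truncate the Fourier series, the bound $|Y_m|\le\operatorname{card} I_m$ is \emph{not} immediate: $\widetilde X_m$ is a partial Fourier sum, which can overshoot (Gibbs phenomenon), so you need an explicit uniform bound on the Dirichlet partial sums of $\varphi$ to recover $\|\widetilde X_m\|_\infty\le\operatorname{card} I_m$. The conditional-expectation construction avoids this entirely, since $|\mathbb{E}(\varphi(k\cdot)\mid\mathcal F_m)|\le\|\varphi\|_\infty\le 1$ pointwise. As a peripheral remark, the eventual application in Theorem~\ref{ex} proceeds via a Berry--Esseen / CLT lower bound plus Borel--Cantelli applied to the normalized sums of independent $Y$'s, not via the Kolmogorov three-series theorem, but that does not bear on the proof of the lemma itself.
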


\begin{proof}
 Let ${\mathcal F}_m$ denote the $\sigma$-field generated by
the dyadic intervals
\begin{equation}\label{4}
U_j:=\left[j 2^{-16q_m},  (j +1)2^{-16q_m}\right],\qquad 0\le j
< 2^{16q_m}
\end{equation}
and set
\begin{align*}
& \xi_k = \xi_k(\cdot) :=\mathbb{E}(\varphi(k \cdot)|{\mathcal F}_m), \qquad k\in I_m \\
& Y_m=Y_m(\omega):  =\sum_{k\in I_m} \xi_k(\omega).
\end{align*}
Since $|\varphi|\le 1$, we have $|\xi_k|\le 1$ and thus $|Y_m|\le
\text{card}\, I_m$. Further, by $\varphi \in \text{BV}$ the Fourier
coefficients of $\varphi$ are $\mathcal{O}(1/k)$ and thus from Lemma 3.1 of
\cite{be76} it follows that
$$
\|\xi_k (\cdot) -\varphi(k\cdot)\| \ll (k 2^{-16q_m})^{1/6}\qquad k \in
I_m,
$$
and since $I_m$ has at most $2^{q_m}$ elements, we get
$$
\|X_m-Y_m\|\ll 2^{-{q_m}},
$$
which implies
$$
\|X_m-Y_m\| \le 2^{-m}
\qquad\text{for}\qquad m \ge m_0.
$$
Since $p_{m+1}\ge 16q_m$ and since each $k\in I_{m+1}$ is a multiple
of $2^{p_{m+1}}$, each interval $U_j$ in (\ref{4}) is a period
interval for all $\varphi(k x)$, $k \in I_{m+1}$ and thus also for $\xi_k$,
$k \in I_{m+1}$. Hence $Y_{m+1}$ is independent of the $\sigma$-field
${\mathcal F}_m$, and since ${\mathcal F}_1\subset {\mathcal F}_2
\subset \dots$ and $Y_m$ is ${\mathcal F}_m$-measurable, the random variables
$Y_1, Y_2, \dots$ are independent. Finally $\mathbb{E}\xi_k=0$ and thus
$\mathbb{E} Y_m=0$.
\end{proof}

\begin{proof}[Proof of Theorem~\ref{ex}]

We will actually prove a little more than what is stated in the theorem: we show that for any positive
sequence $\ve_k \to 0$ there exists an increasing sequence $(n_k)_{k \geq 1}$
of integers and a real sequence $(c_k)_{k \geq 1}$ such that
$$ \sum_{k=1}^\infty c_k^2 (\log\log k)^2 \ve_k<\infty$$
and $\sum_{k=1} ^\infty c_k \varphi(n_k x)$ diverges a.e. Let $\ve_k^*=\sup_{j\ge k} \ve_j$ and let $(\psi_k)_{k \geq 1}$ be a sequence of positive integers growing so rapidly that
$\psi_{k+1}/\psi_k\ge 2$ for $k \geq 1$ and
$$\sum_{m=1}^\infty \ve_{M_{m-1}}^*<\infty$$
where
$$M_m := \sum_{k \leq m} \psi_k^4. $$
Put $r_k:=\psi_k^3$. By the result of G\'al \cite{G} stated in the introduction, there exists, for each
$m \ge 1$, a sequence $n_1^{(m)} < n_2^{(m)} <\ldots <
n_{\psi_m}^{(m)}$ of positive integers such that
\begin{equation}\label{7}
\int\limits_0^1 \left( \sum_{k=1}^{\psi_m}  \varphi (n_k^{(m)} \omega)
\right)^2 d\omega \ge c \psi_m (\log\log \psi_m)^2
\end{equation}
(here, and in the sequel, $c$ denotes appropriate positive constants, not always the same).
Note that by the upper estimate in G\'al's theorem \cite{G}, the opposite inequality in
(\ref{7}) with a suitable $c$ is automatically valid.
We define sets
\begin{equation}\label{8}
I_1^{(1)},I_2^{(1)},\dots,I_{r_1}^{(1)},
I_1^{(2)},\dots,I_{r_2}^{(2)}, \dots, I_1^{(m)},\dots,I_{r_m}^{(m)},
\dots
\end{equation}
of positive integers by requiring
$$
I_k^{(m)}:= 2^{a_k^{(m)}} \left \{n_1^{(m)},\dots, n_{\psi_m}^{(m)}
\right\},\qquad 1\le k \le r_m,\; m\ge 1,
$$
where $a_k^{(m)}$ are suitable positive integers. (Here for any set
$\{ a,b,\dots\}\subset \mathbb{R}$ and $\mu \in \mathbb{R}$ we write $\mu \{
a,b,\dots\}$ for the set $\{ \lambda a,\lambda b,\dots\}$.)
Clearly we can choose the integers $a_k^{(m)}$ inductively so that
the sets $I_k^{(m)}$ in (\ref{8}) satisfy the conditions assumed in Lemma \ref{ber}
for the sets $I_m$. Since the left-hand side of (\ref{7})
does not change if we replace every $n_k^{(m)}$ with $a n_k^{(m)}$
for some integer $a \geq 1$, setting
$$
X_k^{(m)} = X_k^{(m)} (\omega):= \sum_{j \in I_k^{(m)}} \varphi
(j \omega),
$$
then we have
\begin{equation}\label{9}
\mathbb{E} \left( X_k^{(m)}\right)^2 \ge c \psi_m (\log\log
\psi_m)^2.
\end{equation}
Note that, just as in
the case of (\ref{7}), the opposite inequality with a suitable $c$ is also valid in (\ref{9}).
By Lemma \ref{ber}, there exist independent random variables $Y_k^{(m)}$
($1\le k \le r_m$, $m \geq 1$), such that $|Y_k^{(m)}|\le \psi_m$,
$\mathbb{E} Y_k^{(m)}=0$ and
\begin{equation}\label{10}
\sum_{m,k} \|X_k^{(m)}-Y_k^{(m)}\|<\infty \qquad
\end{equation}
whence
\begin{equation}\label{11}
\sum_{m,k} |X_k^{(m)}-Y_k^{(m)}|<\infty \quad \text{a.e.}
\end{equation}
By (\ref{9}) and (\ref{10}) we have
\begin{equation}\nonumber 
\mathbb{E} \left(Y_k^{(m)}\right)^2 \ge c \psi_m (\log\log \psi_m)^2.
\end{equation}
Hence setting
$$
Z_m: = \dfrac{1}{\sqrt{r_m\psi_m} \log\log \psi_m}
\sum_{k=1}^{r_m} Y_k^{(m)}, \qquad \sigma_m^2: = \mathbb{E}\left(
\sum_{k=1}^{r_m} Y_k^{(m)}\right)^2,
$$
we get from the central limit theorem with Berry--Esseen remainder
term (see e.g. \cite[p. 544]{fe}), (7), and $r_m=\psi_m^3$, that
\begin{eqnarray*}
\mathbb{P} (Z_m \ge 1) & \ge & \mathbb{P} \left( \sum_{k=1}^{r_m} Y_k^{(m)} \ge  c_1
\sigma_m \right)
\ \ge \
 1-\Phi (c_1) - c \dfrac{r_m \psi_m^3}{(r_m \psi_m
(\log\log \psi_m)^2)^{3/2}} \\
& \ge & 1-\Phi (c_1)- o(1)\ \ge \ c_2 >0\qquad  \textrm{for $m \ge m_0$},
\end{eqnarray*}
where $\Phi$ denotes the Gaussian distribution function and $c_1$ and
$c_2$ are positive absolute constants. Since the random variables $Z_m$ are
independent, the Borel--Cantelli lemma implies that $\mathbb{P}(Z_m\ge 1 \;
\text{for infinitely many}\ m )=1$ and consequently $\sum_{m=1}^\infty Z_m$ is a.e.\ divergent, which,
in view of (\ref{11}), yields that
\begin{equation}\nonumber 
\sum_{m=1}^\infty\, \dfrac{1}{\sqrt{r_m \psi_m} \log\log \psi_m}\,
\sum_{k=1}^{r_m} X_k^{(m)} \qquad\text{is a.e.\ divergent.}
\end{equation}
In other words, $\sum_{k=1}^\infty c_k \varphi(n_k x)$ is a.e.\ divergent,
where
\begin{equation}\nonumber 
(n_k)_{k\ge 1}:= \bigcup\limits_{m=1}^\infty\,
\bigcup\limits_{k=1}^{r_m} I_k^{(m)}
\end{equation}
and
$$ c_k^2: =  \frac{1}{r_m \psi_m
(\log\log \psi_m)^2} \qquad \text{for} \quad M_{m-1} < k \leq
M_m.$$ Now for $M_{m-1} < k \leq M_m$ we have by the exponential growth of $(\psi_k)_{k\ge 1}$ with quotient
$q\ge 2$  that
$$k \le 2\psi_m^4 \qquad \textrm{and} \qquad \log\log k \le
2\log\log \psi_m \qquad \textrm{for $m\ge m_0$}.$$ Consequently for $M_{m-1}
< k \leq M_m$ we have
$$ c_k^2 (\log\log k)^2 \ve_k \leq c \frac{1}{r_m \psi_m} \ve_{M_{m-1}}^*. $$ Hence
$$ \sum_{k=1}^{\infty} c_k^2 (\log\log k)^2 \varepsilon_k \leq c \sum_{k=1}^{\infty}
\ve_{M_{k-1}}^*  < \infty,$$ which means that we have reached the desired conclusion.

\end{proof}

\section*{Acknowledgements} The authors are grateful to Eero Saksman for a careful reading of the manuscript and in
particular for pertinent remarks concerning Lemma~\ref{divisor}. They would also like to express their gratitude to the anonymous referee for a careful review leading to a clarification of some essential technical details.

\end{document}